\newcommand\invisiblesection[1]{%
  \refstepcounter{section}%
  \addcontentsline{toc}{section}{\protect\numberline{\thesection}#1}%
  \sectionmark{#1}}
\newtheorem{theorem}{Theorem}[section]
\newtheorem{proposition}[theorem]{Proposition}
\newtheorem{cor}[theorem]{Corollary}
\newtheorem{lemma}[theorem]{Lemma}
\theoremstyle{definition}
\newtheorem{definition}{Definition}
\newtheorem{remark}[theorem]{Remark}
\newcommand{\Si}{\mathcal{S}}
\newcommand{\C}{\mathcal{C}}
\renewcommand{\Re}{{\mathbb R}}
\newcommand{\Red}{\Re^d}
\newcommand{\Ci}{\mathcal{C}}
\newcommand{\Ei}{E}
\newcommand{\Elli}{\mathcal{E}}
\newcommand{\HH}{\mathcal{H}}
\newcommand{\Ball}{\mathbf{B}^d}
\newcommand{\emphdef}[1]{\emph{\textbf{#1}}}
\newcommand{\qhellyvol}{c^{d^2}d^{-5d^2/2+d}}
\newcommand{\noshow}[1]{}
\newcommand{\Href}[2]{\hyperref[#2]{#1~\ref{#2}}}
\newcommand{\vol}[1]{\mathrm{vol}\left(#1\right)}
\newcommand{\conv}[1]{\mathrm{conv}\left(#1\right)}
\newcommand{\st}{\;:\;}
\newcommand{\mybinom}[2]{\left(\begin{array}{@{}c@{}}#1\\#2\end{array}\right)}
\title{Quantitative Fractional Helly and \texorpdfstring{\lowercase{$(p,q)$}}{\lowercase{(p,q)}}-Theorems}
\author{Attila Jung}
\address{A.J.: Inst. of Mathematics, ELTE E\"otv\"os Lor\'and University, Budapest, Hungary}
\email{jungattila@gmail.com}
\author{M\'arton Nasz\'odi}
\address{M. N.: 
MTA-ELTE Lend\"ulet Combinatorial Geometry Research Group, Budapest, Hungary and
Dept. of Geometry, ELTE E\"otv\"os Lor\'and University, Budapest, Hungary}
\email{marton.naszodi@math.elte.hu}
\keywords{intersection of convex sets, volume of convex bodies, fractional Helly theorem, (p,q)-Theorem}
\subjclass[2020]{Primary 52A35; Secondary 52A38}
\begin{document}
\begin{abstract}
We consider quantitative versions of Helly-type questions, that is, instead of finding 
a point in the intersection, we bound the volume of the intersection.
Our first main result is a quantitative version of 
the Fractional Helly Theorem of Katchalski and Liu, the second one is a quantitative version of the $(p,q)$-Theorem of Alon and Kleitman.
\end{abstract}

\maketitle

\section{Introduction}\label{sec:intro}

The study of quantitative versions of Helly-type questions was initiated by 
B\'ar\'any, Katchalski and Pach in \cite{BKP82}, where the \emphdef{Quantitative 
Volume Theorem} is shown, stating the following. \label{page:QVT}
\emph{Assume that the intersection of any $2d$ members of a finite family of convex sets in $\Red$ is of volume at least one. Then the volume of the intersection of all members of the family is of volume at least $c(d)$, a constant depending on $d$ only.} 

In \cite{BKP82}, it is proved that one can take $c(d)=d^{-2d^2}$ and conjectured that it should 
hold with $c(d)=d^{-cd}$ for an absolute constant $c>0$. It was confirmed  with 
$c(d)\approx d^{-2d}$ in \cite{MR3439267}, whose argument was refined by 
Brazitikos \cite{Bra17}, who showed that one may take $c(d)\approx d^{-3d/2}$. 
For more on quantitative Helly-type results, see the surveys 
\cite{HW18survey,DGFM19survey} and the recent papers \cite{RoSo17, MR3602856, 
SaXuSo}.

In the present note, we continue the study started in \cite{DFN2019} (where a 
quantitative variant of the Colorful Helly Theorem is shown) by proving variants 
of the Fractional Helly Theorem, the $(p,q)$-Theorem and related results.

\noshow{
The \emphdef{Colorful Helly Theorem}\label{page:chelly} found by 
Lov\'asz \cite{Lo74} (and with the first published proof by B\'ar\'any 
\cite{MR676720}) states the following.
\emph{If 
$\Ci_1,\dots, \Ci_{d+1}$ are finite families (color classes) of convex sets in 
$\Red$, such 
that for any colorful selection $C_1\in \Ci_1,\dots, C_{d+1}\in \Ci_{d+1}$, the 
intersection $\bigcap\limits_{i=1}^{d+1} C_i$ is non-empty, then for some $j$, 
the intersection $\bigcap\limits_{C\in \Ci_j} C$ is also non-empty.}

In \cite{DFN2019}, the following quantitative variant is shown.

\begin{theorem}[Quantitative Colorful Helly Theorem]\label{thm:colorfulqellipsoid}
Let $\C_1,\linebreak[0]\ldots,\C_{3d}$ be finite families of convex sets in 
$\Red$. Assume that for any colorful choice of $2d$ sets, $C_{i_k}\in \C_{i_k}$ 
for each $1\leq k\leq 2d$ with $1\leq i_1<\ldots<i_{2d}\leq 3d$, the 
intersection $\bigcap\limits_{k=1}^{2d} C_{i_k}$ is of volume at least one. 

\noindent Then, there is an $i$ with $1\leq i \leq 3d$ such that 
$\vol{\bigcap\limits_{C\in \C_i}C}\geq c(d)$ with some 
$c(d)>0$.
\end{theorem}
}

The \emphdef{Fractional Helly Theorem}  
due to Katchalski and Liu \cite{KL79} (see also \cite[Chapter 8]{MR1899299})
states the following. 
\label{page:fracHelly}
\emph{Fix a dimension $d$, and an $\alpha\in(0,1)$, and let $\C$ be a finite 
family of convex sets in $\Red$ with the property that among the subfamilies of 
$\C$ of size $d+1$, there are at least $\alpha \binom{|\C|}{d+1}$ for whom the 
intersection of the $d+1$ members is nonempty. Then, there is a subfamily 
$\C^\prime\subset\C$ of size $|\C^{\prime}|\geq\frac{\alpha}{d+1} |\C|$ such 
that the intersection of all members of $\C^\prime$ is nonempty.}

Our first main result is the following, a Quantitative Fractional Helly Theorem (QFH in short).

\begin{theorem}[QFH]\label{thm:QFHvolFew}
 For every dimension $d \geq 1$ and every $\alpha\in(0,1)$, there is a 
 $\beta\in(0,1)$ such that the following holds.

\noindent Let $\C$ be a finite family of convex sets in $\Red$. Assume that among all 
subfamilies of size $3d+1$, there are at least $\alpha \binom{|\C|}{3d+1}$ for 
whom the intersection of the $3d+1$ members is of volume at least one.

    \noindent Then, there is a subfamily $\C^\prime\subset\C$ of size at least 
$\beta |\C|$ such that the intersection of all members of $\C^\prime$ is of 
volume at least $d^{-cd^2}$ with a universal constant $c>0$.
\end{theorem}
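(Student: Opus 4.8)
The plan is to follow the classical route from the Helly theorem to the Fractional Helly Theorem, but feed in a volumetric (colorful) Helly statement in place of the qualitative one at the very end. Write $n=|\C|$, call a subfamily \emph{heavy} if the intersection of its members has volume at least one, and let $\HH$ be the abstract simplicial complex on the vertex set $\C$ whose faces are the heavy subfamilies; this is downward closed, since enlarging a subfamily only shrinks its intersection, so $\HH$ really is a simplicial complex, and the hypothesis is exactly that $\HH$ has at least $\alpha\binom{n}{3d+1}$ faces of dimension $3d$. Since a heavy subfamily has full-dimensional, hence nonempty, intersection, $\HH$ is a subcomplex of the nerve $\mathcal N$ of the open bodies $\{\mathrm{int}\,C:C\in\C\}$, and $\mathcal N$ is $d$-collapsible (Wegner).

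\emph{Step 1: descending to $2d$-tuples.} Every heavy $(3d+1)$-tuple has all of its $\binom{3d+1}{2d}$ subfamilies of size $2d$ heavy. Counting incidences between heavy $(3d+1)$-faces and their $2d$-subfaces, bounding the number of $(3d+1)$-sets above a fixed $2d$-set by $\binom{n-2d}{d+1}$, and using $\binom{n}{3d+1}\binom{3d+1}{2d}=\binom{n}{2d}\binom{n-2d}{d+1}$, I get that at least $\alpha\binom{n}{2d}$ of the $2d$-element subfamilies of $\C$ are heavy, and the same computation gives at least $\alpha\binom{n}{d+1}$ heavy $(d+1)$-element subfamilies. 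This is the point of taking the threshold $3d+1$ rather than $d+1$: it buys enough slack to pass from $(d+1)$-wise to $2d$-wise heaviness, $2d$ being the size at which the Quantitative Volume Theorem and its colorful version operate, with a further surplus matching the $3d$ color classes of the Quantitative Colorful Helly Theorem of \cite{DFN2019}.

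\emph{Step 2: combinatorial largeness and a colorful upgrade.} Since $\HH\subseteq\mathcal N$, the nerve has at least $\alpha\binom{n}{d+1}$ faces of dimension $d$, so by the Fractional Helly Theorem stated above (equivalently, the Eckhoff--Kalai bound for $d$-collapsible complexes underlying it) there is a subfamily of linear size whose members share an interior point --- but this does not yet control volume. What I really want is a subfamily $\mathcal D\subseteq\C$ of linear size, partitioned into $3d$ classes $\mathcal D_1,\dots,\mathcal D_{3d}$ each of linear size, such that every colorful $2d$-selection $C_{i_1}\in\mathcal D_{i_1},\dots,C_{i_{2d}}\in\mathcal D_{i_{2d}}$ with $1\le i_1<\dots<i_{2d}\le 3d$ is heavy. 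Granting this, the Quantitative Colorful Helly Theorem of \cite{DFN2019} yields an index $j$ with $\vol{\bigcap_{C\in\mathcal D_j}C}\ge\qhellyvol\ge d^{-cd^2}$; setting $\C'=\mathcal D_j$, so that $|\C'|\ge|\mathcal D|/(3d)\ge\beta n$, completes the proof.

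\emph{Main obstacle.} All the difficulty sits in Step 2: converting the \emph{global} abundance of heavy faces into a linear-size subfamily carrying a \emph{local}, colorful heaviness pattern. Combinatorial density alone cannot do this --- a downward-closed complex with a positive fraction of $(3d+1)$-faces can fail to have any large ``$2d$-clique'' (a set all of whose $2d$-subsets are faces) --- so convexity must be used twice: the $d$-collapsibility of $\mathcal N$ to localize the heavy faces into a linear-size block, and the geometry of the heavy $(3d+1)$-intersections to keep that block ``fat''. A natural tool for the second point is to replace each heavy body $\bigcap_{C\in\sigma}C$ by its John ellipsoid $\Elli_\sigma$, which has volume at least $d^{-d}$ and lies inside every $C\in\sigma$; ellipsoids form an $O(d^2)$-parameter family, which is also roughly where the exponent $d^2$ in the conclusion originates. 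Getting all $3d$ color classes of linear size simultaneously is the delicate part; the rest is bookkeeping or a direct appeal to the quoted results.
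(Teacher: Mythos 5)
Your proposal is not a proof: everything that makes the theorem true is deferred to your ``Step 2,'' which you yourself flag as the main obstacle and do not carry out. Step 1 (downward counting from heavy $(3d+1)$-tuples to heavy $2d$- and $(d+1)$-tuples) is correct but routine, and the route you sketch for Step 2 is doubtful on its own terms: you would need a linear-size subfamily split into $3d$ linear-size colour classes in which \emph{every} colourful $2d$-selection is heavy, which is a complete colourful pattern far stronger than what a positive fraction of heavy $(3d+1)$-faces provides (you concede the analogous failure for cliques). Neither of the two rescue ideas you mention is developed into an argument, and the first cannot work as stated: the nerve of the interiors and its $d$-collapsibility only record nonemptiness of intersections, so they cannot ``localize'' heaviness; the John-ellipsoid parameter count is an observation, not a mechanism. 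Fractional Helly-type statements are not proved by finding large complete (colourful) substructures but by pigeonholing on a bounded-size certificate, and that certificate is exactly what is missing from your write-up.

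For comparison, the paper's proof of Theorem~\ref{thm:QFHfew} (which gives Theorem~\ref{thm:QFHvolFew} via Remark~\ref{rem:ellipsoidvsvolume}) runs as follows, and never uses the colorful theorem. For each good $I\in\binom{[n]}{3d+1}$, the Quantitative Helly Theorem~\ref{lem:qhellyell} forces some $2d$-subset $S\subset I$ (a \emph{seed}) whose John ellipsoid has volume at most $c^{-d}d^{3d/2}$ times that of $\cap_{i\in I}C_i$; since there are only $\binom{n}{2d}$ candidate seeds, some fixed $S$ seeds at least $\gamma\binom{n}{d+1}$ good sets. Writing $\mathcal{E}$ for the John ellipsoid of $\cap_{i\in S}C_i$ (of volume at least one), Lemma~\ref{lem:hellyklee} places a translate of $L=c^dd^{-5d/2+1}\mathcal{E}$ inside each of these good intersections, and then Proposition~\ref{prop:fhellyklee} --- the classical Fractional Helly Theorem applied to the Minkowski differences $C\sim L$ --- produces a subfamily of size $\frac{\gamma}{d+1}n$ whose common intersection contains a translate of $L$, hence an ellipsoid of volume at least $\qhellyvol$. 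The missing idea in your proposal is precisely this seed construction: a volumetric certificate of bounded size ($2d$ sets) attached to every good tuple, which converts global density into a single fixed ellipsoid shared (up to translation and a $d^{-O(d)}$ factor) by a positive fraction of $(d+1)$-wise intersections.
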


\begin{remark}[Ellipsoids and volume]\label{rem:ellipsoidvsvolume}
A well known consequence of John's Theorem, is that the volume of any compact 
convex set $K$ in $\Red$ with non-empty interior is at most $d^d$ times larger than the 
volume of the largest volume ellipsoid contained in $K$. More precise bounds for 
this volume ratio are known (cf. \cite{Ba97}), but we will not need them. Thus, 
\emph{from this point on, we phrase our results in terms of the volume of ellipsoids 
contained in intersections}. Its benefit is that this is how in the proofs we 
actually ``find volume'': we find ellipsoids of large volume. 
\end{remark}

In this spirit, we re-state our result in terms of ellipsoids. It immediately yields Theorem~\ref{thm:QFHvolFew} by Remark~\ref{rem:ellipsoidvsvolume}.

\begin{theorem}[QFH in terms of ellipsoids]\label{thm:QFHfew}
 For every dimension $d \geq 1$ and every $\alpha\in(0,1)$, there is a $\beta\in(0,1)$ such that the following holds.

 \noindent Let $\C$ be a finite family of convex sets in $\Red$. Assume that among all subfamilies of size $3d+1$, there are at least $\alpha \binom{|\C|}{3d+1}$ for whom the intersection of the $3d+1$ members contains an ellipsoid of volume one.

\noindent Then, there is a subfamily $\C^\prime\subset\C$ of size at least 
$\beta |\C|$ such that the intersection of all members of $\C^\prime$ contains 
an ellipsoid of volume at least $\qhellyvol$, where $c$ is the universal 
constant from Theorem~\ref{lem:qhellyell}.
\end{theorem}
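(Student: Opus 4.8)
The plan is to reduce Theorem~\ref{thm:QFHfew} to the Quantitative Helly Theorem~\ref{lem:qhellyell}. It would suffice to exhibit a subfamily $\C'\subseteq\C$ with $|\C'|\ge\beta|\C|$ such that the intersection of \emph{every} $3d+1$ members of $\C'$ contains an ellipsoid of volume one: applying Theorem~\ref{lem:qhellyell} to $\C'$ then produces an ellipsoid of volume at least $\qhellyvol$ inside $\bigcap_{C\in\C'}C$, which is exactly the conclusion. So essentially all the work is in extracting such a $\C'$ (or a closely related colorful structure) from the fractional hypothesis.

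For the extraction I would imitate the classical proof of the Fractional Helly Theorem while carrying the quantitative datum through each step. Write $n=|\C|$, fix an enumeration $\C=\{C_1,\dots,C_n\}$, and call a $(3d+1)$-element set $I\subseteq[n]$ \emph{heavy} if $\bigcap_{i\in I}C_i$ contains an ellipsoid of volume one; by hypothesis at least $\alpha\binom{n}{3d+1}$ sets are heavy, and, crucially, heaviness descends to subsets. One cannot finish by a Tur\'an-type statement, because a positive density of heavy $(3d+1)$-tuples need not force a linear-size ``heavy clique'' -- so the geometry has to be used. The route I have in mind is colorful: after a suitable partition refinement one isolates pairwise disjoint sets $A_1,\dots,A_{3d+1}\subseteq[n]$, each of linear size, such that every rainbow $(3d+1)$-tuple (one index from each $A_k$) is heavy. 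Since heaviness descends to subtuples, this forces, for any $d+1$ of these classes, every rainbow $(d+1)$-tuple to have nonempty intersection; a quantitative colorful Helly statement -- Theorem~\ref{lem:qhellyell} or the Quantitative Colorful Helly Theorem of \cite{DFN2019} -- then forces some class $A_j$ to have $\bigcap_{i\in A_j}C_i$ containing an ellipsoid of volume $\qhellyvol$, and $\C'=\{C_i:i\in A_j\}$ has at least $\beta n$ members.

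I expect the genuinely hard part to be the middle step: manufacturing the colorful structure in which \emph{all} rainbow $(3d+1)$-tuples are heavy. The obstacle is that heaviness is not determined by the ``combinatorial type'' of a $(3d+1)$-tuple of convex bodies -- it depends on metric data, not only on the separation pattern -- so the usual same-type lemma does not apply verbatim; one would have to either use a quantitative (metric) refinement of the same-type lemma, or first normalize each $C_i$ through John's theorem (Remark~\ref{rem:ellipsoidvsvolume}) to reduce to a bounded-complexity situation at the price of a controlled loss in the constant. A minor additional point is that the $\beta$ produced this way will be of Ramsey type in $\alpha$ and hence possibly minuscule; this is harmless, since Theorem~\ref{thm:QFHfew} only claims the existence of some $\beta\in(0,1)$.
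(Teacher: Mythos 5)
There is a genuine gap, and it sits exactly where you say the hard part is. Your opening reduction asks for strictly more than the theorem provides: a linear-size subfamily $\C'$ in which \emph{every} $(3d+1)$-tuple (equivalently every $2d$-tuple) contains an ellipsoid of volume one is a stronger structure than the conclusion of Theorem~\ref{thm:QFHfew}, which only produces tuples containing ellipsoids of the much smaller volume $\qhellyvol$. Because Theorem~\ref{lem:qhellyell} does not preserve the volume-one threshold, a positive density of heavy $(3d+1)$-tuples carries no combinatorial or geometric force toward a linear-size ``all-heavy clique'': a dense, downward-closed $(3d+1)$-uniform hypergraph need not contain one, and nothing in the quantitative hypothesis rules such examples out. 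The same objection applies to the rainbow variant you propose instead: no quantitative same-type lemma or positive-fraction machinery is available (as you note, heaviness is metric rather than order-type information), and the two fixes you float --- a metric refinement of the same-type lemma, or a normalization via John's theorem, which in any case cannot be performed simultaneously for all members of $\C$ by a single affine map --- are not carried out. Since manufacturing that clique/rainbow structure is the entire content of the theorem in your plan, the proposal does not amount to a proof; only the final, easy step (quantitative or colorful Helly applied to the extracted structure) is secured.

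For contrast, the paper never builds such a structure. For each heavy $I\in\binom{[n]}{3d+1}$, Theorem~\ref{lem:qhellyell} (after rescaling) supplies a $2d$-subset $S\subset I$, a ``seed'', whose John ellipsoid has volume at most $c^{-d}d^{3d/2}$ times that of the John ellipsoid of $\bigcap_{i\in I}C_i$. Pigeonhole over the $\binom{n}{2d}$ possible seeds gives one $2d$-set $S$ that seeds at least $\gamma\binom{n}{d+1}$ heavy sets; Lemma~\ref{lem:hellyklee} then places a translate of $c^d d^{-5d/2+1}\mathcal{E}$, where $\mathcal{E}$ is the John ellipsoid of $\bigcap_{i\in S}C_i$ and has volume at least one, inside each of the corresponding $(d+1)$-wise intersections; finally Proposition~\ref{prop:fhellyklee} (the classical Fractional Helly Theorem applied to Minkowski differences) yields a subfamily of size $\frac{\gamma}{d+1}n$ whose full intersection contains a translate of that ellipsoid, of volume at least $\qhellyvol$. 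If you want to rescue your outline, the lesson is to pigeonhole on approximate witnesses (seeds and their John ellipsoids) rather than to seek a subfamily or color classes in which the volume-one hypothesis itself persists.
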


\begin{remark}
	The lower bound on $\beta$ obtained by our proof is $\frac{\alpha}{C^d}$ with a universal constant $C>1$. Note that in the case of the classical Fractional Helly Theorem, the sharp bound on $\beta$ shown in \cite{Ka84} by Kalai is $\beta = 1 - (1-\alpha)^\frac{1}{d+1}$.
\end{remark}

Holmsen \cite{holmsen19} recently showed that, in an abstract setting, a colorful Helly-type result implies a fractional one. In \cite{SaXuSo},
the authors combine Holmsen's result with a Quantitative Colorful Helly Theorem (\cite[Proposition~1.4]{DFN2019}, \cite[Corollary~1.0.3]{SaXuSo}) to obtain a quantitative fractional result. We note that this approach works only when the size of subfamilies considered is large, $\frac{d(d+3)}{2}$.

\begin{proposition}[QFH -- large subfamilies]\label{prp:QFHmany}
	For every dimension $d \geq 1$ and every $\alpha \in (0,1)$ the following holds.
	
	Let $\C$ be a finite family of convex bodies in $\Red$. Assume that among all subfamilies of size $\frac{d(d+3)}{2}$, there are at least $\alpha\binom{|\C|}{\frac{d(d+3)}{2}}$ for whom the intersection of the $\frac{d(d+3)}{2}$ members contains an ellipsoid of volume one.
	
	Then, there is a subfamily $\C' \subset \C$ of size at least $\beta|\C|$ such that the intersection of all members of $\C'$ contains an ellipsoid of volume one, where $\beta=\frac{2\alpha}{d(d+3)}$.
\end{proposition}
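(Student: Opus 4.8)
\emph{Proof plan.} Following \cite{SaXuSo}, I would obtain this by combining the Quantitative Colorful Helly Theorem of \cite{DFN2019} with Holmsen's abstract reduction \cite{holmsen19} of fractional Helly to colorful Helly. \emph{Step 1 (pass to a simplicial complex).} Put $m:=\frac{d(d+3)}{2}=\binom{d+1}{2}+d$, take $\C$ itself as ground set, and let $\mathcal{K}$ be the family of those subfamilies $\mathcal{D}\subseteq\C$ for which $\bigcap_{C\in\mathcal{D}}C$ contains an ellipsoid of volume one (with $\bigcap_{C\in\emptyset}C=\Red$). Since replacing $\mathcal{D}$ by a subfamily only enlarges the intersection, $\mathcal{K}$ is downward closed, hence an abstract simplicial complex on the vertex set $\C$. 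The hypothesis of the proposition says precisely that at least $\alpha\binom{|\C|}{m}$ of the $m$-element subsets of $\C$ are faces of $\mathcal{K}$, and a face of $\mathcal{K}$ of size at least $\beta|\C|$ is exactly the subfamily $\C'$ we want.

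\emph{Step 2 (colorful Helly number of $\mathcal{K}$).} I would check that $\mathcal{K}$ has colorful Helly number at most $m$: if $\mathcal{F}_1,\dots,\mathcal{F}_m\subseteq\C$ are colour classes such that every rainbow selection $C_1\in\mathcal{F}_1,\dots,C_m\in\mathcal{F}_m$ spans a face of $\mathcal{K}$, then some $\mathcal{F}_i$ is a face of $\mathcal{K}$. Unwound, this is the statement that if the intersection of every colourful $m$-tuple of bodies contains an ellipsoid of volume one, then so does $\bigcap_{C\in\mathcal{F}_i}C$ for some $i$ --- that is, the (lossless, ellipsoid-version) Quantitative Colorful Helly Theorem of \cite{DFN2019}, whose colour-class count is exactly $\binom{d+1}{2}+d=m$. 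Two minor points deserve a word: a body may belong to several colour classes, so a rainbow selection may consist of fewer than $m$ distinct bodies, but then its intersection coincides with that of a genuine $m$-tuple obtained by repeating bodies, so the hypothesis still applies; and the degenerate cases $|\C|<m$ are vacuous.

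\emph{Step 3 (invoke Holmsen's theorem).} By \cite{holmsen19}, any simplicial complex whose colorful Helly number is at most $m$ satisfies the fractional Helly property with the same parameter $m$: for every $\alpha\in(0,1)$ there is $\beta>0$ such that, if at least $\alpha\binom{N}{m}$ of the $m$-subsets of an $N$-vertex complex are faces, then some $\beta N$ vertices span a face. Applying this to $\mathcal{K}$ and translating back via Step~1 yields the desired $\C'$. The only remaining work is quantitative: extracting the explicit value $\beta=\frac{2\alpha}{d(d+3)}=\frac{\alpha}{m}$ requires tracking the dependence of $\beta$ on $\alpha$ through the colourful-to-fractional reduction, as done in \cite{SaXuSo}, and this bookkeeping is where I expect the only real care to be needed. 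It is worth noting why Step~3 cannot be replaced by the argument behind the classical Fractional Helly Theorem of \cite{KL79} (or its sharp form due to Kalai \cite{Ka84}): that route relies on the nerve of convex sets being $d$-Leray, whereas the complex $\mathcal{K}$ here need not be $d$-Leray --- which is exactly what pushes the subfamily size from $d+1$ up to $\frac{d(d+3)}{2}$ and forces the detour through colorful Helly.
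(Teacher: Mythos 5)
Your Steps 1 and 2 are fine as far as they go: the family of subfamilies whose intersections contain a volume-one ellipsoid is indeed downward closed, and the lossless Quantitative Colorful Helly Theorem with $\frac{d(d+3)}{2}$ colour classes (\cite[Proposition~1.4]{DFN2019}, \cite[Corollary~1.0.3]{SaXuSo}) does bound the colorful Helly number of that complex. The genuine gap is in Step 3, where you dismiss the extraction of $\beta=\frac{2\alpha}{d(d+3)}$ as ``bookkeeping'' through Holmsen's colourful-to-fractional reduction. That reduction does not and cannot deliver this value: as the remark following the proposition states, Holmsen's abstract argument yields only $\beta$ roughly $\left(\frac{\alpha}{3d^4}\right)^{\left(d/\sqrt{2}\right)^{d^2}}$, which is wildly far from linear in $\alpha$ and from $\frac{\alpha}{d(d+3)/2}$. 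So your route proves the qualitative statement (some $\beta(\alpha,d)>0$ exists), i.e.\ essentially the result of \cite{SaXuSo} that the introduction credits, but not the proposition as stated with its explicit $\beta$; the missing quantitative step is not routine, it is the actual content.

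The paper instead proves the proposition directly, by transplanting the classical counting proof of the Fractional Helly Theorem (\cite[Theorem~8.1.1]{MR1899299}) into the quantitative setting. Call $I\in\binom{[n]}{d(d+3)/2}$ good if $\bigcap_{i\in I}C_i$ contains a volume-one ellipsoid; by Lemma~\ref{lem:dsquare} (the lowest volume-one ellipsoid of an intersection is already the lowest ellipsoid of the intersection of some $\frac{d(d+3)}{2}-1$ of the bodies), every good $I$ has a ``seed'' $S\subset I$ of size $\frac{d(d+3)}{2}-1$ with the same lowest ellipsoid. Pigeonholing the $\alpha\binom{n}{d(d+3)/2}$ good sets over the $\binom{n}{d(d+3)/2-1}$ possible seeds produces one seed $S$ whose lowest ellipsoid lies in at least $\alpha\frac{n-d(d+3)/2+1}{d(d+3)/2}+\frac{d(d+3)}{2}-1\geq\frac{2\alpha n}{d(d+3)}$ of the bodies, which is exactly the claimed bound; the unique lowest ellipsoid (Lemma~\ref{lem:lowest}) plays the role of the lexicographic minimum point in the classical proof. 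If you want to salvage your write-up, either carry out this direct pigeonhole argument, or weaken the conclusion to an unspecified $\beta(\alpha,d)>0$ and say so explicitly.
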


\begin{remark}
	In general, the value of $\beta$ we obtain is better than the one Holmsen's abstract result \cite{holmsen19} would yield, which is roughly $\left(\frac{\alpha}{3d^4}\right)^{\left(d/\sqrt{2}\right)^{d^2}}$. On the other hand, in \cite{holmsen19}, it is shown that $\beta \to 1$ as $\alpha \to 1$.
\end{remark}

Next, we turn to the $\mathbf{(p,q)}$-\textbf{Theorem}, a strong generalization of Helly's Theorem, conjectured by Hadwiger and Debrunner in 1957 \cite{hadwiger1957variante} and proved by Alon and Kleitman \cite{alon1992piercing} in 1992. It states that \textit{there is a function $H$ such that if we have a family $\C$ of convex sets in $\Red$, and among any $p$ of them there are $q$ with a nonempty intersection ($p \geq q \geq d+1$), then there is a set $T$ with $|T| \leq H(p,q,d)$ such that every member of $\C$ contains at least one point from $T$}.

Observe that for any $p\geq q\geq d+1$, we have that $H(p,q,d)\leq H(p,d+1,d)$. Thus, to show the existence of the function $H$, it is sufficient to show that $H(p,d+1,d)$ is finite for all $p\geq d+1$.

\begin{definition}[Quantitative $v$-transversal number]
	For a $v>0$, we say that a family $T$ of ellipsoid of volume $v$ is a 
	\emph{quantitative $v$-transversal} of a family $\C$ of 
	convex bodies in $\Red$, if every $C \in \C$ contains at least one ellipsoid from $T$. The quantitative $v$-transversal number $\tau = \tau (\C, v)$ for $\C$ is the smallest cardinality of a quantitative $v$-transversal.
	
	If there is a $C \in \C$ that contains no ellipsoid of volume $v$, then we set $\tau (\C, v)=\infty$.
\end{definition}

Our second main result follows. We phrase it without reference to $q$ in the definition of $H$, in fact, we fix $q$ at the smallest possible value 
for which we can prove the statement, $q=3d+1$.

\begin{theorem}[Quantitative \texorpdfstring{$(p,q)$}{(p,q)}-Theorem -- small lower bound on $p$]\label{thm:pqsmallp}
	For every dimension $d \geq 1$ and every positive integer $p \geq 3d+1$, there is an integer 
	$H = H(p,d)$ such that the following holds.
	
	Let $\C$ be a finite family of convex sets in $\Red$, each containing an ellipsoid of volume one, and assume that among any $p$ members of $\C$, there exist $3d+1$ whose intersection contains an ellipsoid of volume one.
	
	Then, $\C$ has quantitative $v$-transversal number at most $H(p,d)$ with $v=\qhellyvol$, where $c$ is the universal constant from Theorem~\ref{lem:qhellyell}.
\end{theorem}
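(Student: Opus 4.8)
The plan is to follow the classical Alon--Kleitman strategy, using the Quantitative Fractional Helly Theorem (in its ellipsoid form, Theorem~\ref{thm:QFHfew}) in place of the ordinary Fractional Helly Theorem, and a weak-epsilon-net / LP-duality argument in place of the usual one. First I would pass from the original family $\C$ to an auxiliary family $\widehat{\C}$ of ellipsoids: since each $C \in \C$ contains an ellipsoid of volume one, and intersections of $3d+1$ members contain an ellipsoid of volume one, we want to replace each $C$ by a suitable finite collection of ellipsoids of volume (roughly) $v=\qhellyvol$ so that a quantitative $v$-transversal of $\C$ corresponds to an ordinary transversal (piercing set) of the ellipsoid family. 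The natural device is: an ellipsoid $E$ of volume $v$ is contained in $C$ iff its \emph{center} lies in a certain ``shrunken'' body; more cleanly, one shows that the set of centers of volume-$v$ ellipsoids contained in $C$, after fixing finitely many shapes, is itself convex, so one reduces to a genuine Helly-type setup for a convex set system.

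Second, I would verify the $(p,q)$ hypothesis transfers to a fractional hypothesis on the ellipsoid system: the assumption that among any $p$ members of $\C$ some $3d+1$ have intersection containing a volume-one ellipsoid, combined with a counting argument (each ``bad'' $(3d+1)$-tuple is confined to few $p$-subsets), yields that a positive fraction $\alpha=\alpha(p,d)$ of all $(3d+1)$-subfamilies have intersection containing a volume-one ellipsoid. Then Theorem~\ref{thm:QFHfew} produces a subfamily $\C'$ of size $\geq \beta|\C|$ whose common intersection contains an ellipsoid of volume $\ge v=\qhellyvol$. In the language of hypergraph covering / fractional relaxations, this says the ellipsoid system has a large ``fractional Helly'' property, which by the standard Alon--Kleitman machinery (fractional Helly $\Rightarrow$ bounded fractional transversal number via LP duality $\Rightarrow$ bounded integral transversal number via the bounded VC-dimension / $\varepsilon$-net theorem) gives a transversal of bounded size $H(p,d)$.

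Concretely the steps, in order, are: (1) fix a finite net $\mathcal{N}$ of ellipsoid shapes of volume $v$ so that whenever a convex body contains some volume-$\ge v$ ellipsoid it contains a homothetic copy of one in $\mathcal{N}$ (up to mild loss of volume, absorbed into the constant $c$); for each $C \in \C$ and each shape $N \in \mathcal{N}$ let $K_{C,N} \subseteq \Red$ be the (convex) set of admissible centers, producing a set system $\Sigma = \{K_{C,N}\}$ on $\Red$; (2) show $\Sigma$ has bounded dual VC-dimension — this follows since each $K_{C,N}$ is convex, hence the dual shatter function is polynomially bounded; (3) translate the $(p,q)$ and quantitative-intersection hypotheses into: among any $p$ sets of the form $\{C : C\in \C\}$ there are $3d+1$ with a common admissible center for a common shape, hence a positive-fraction fractional Helly hypothesis for $\Sigma$ after a pigeonhole over the finitely many shapes in $\mathcal{N}$; (4) apply Theorem~\ref{thm:QFHfew} to get that $\Sigma$ satisfies fractional Helly with parameter $3d+1$ and some $\beta>0$; (5) invoke the Alon--Kleitman theorem in the abstract form ``bounded VC-dimension + fractional Helly $\Rightarrow$ bounded transversal number'' (as in \cite{alon1992piercing}, see also \cite{MR1899299}) to bound $\tau(\Sigma)$ by some $H'(p,d)$; (6) unwind: a transversal of $\Sigma$ is a set of centers, each with an associated shape from $\mathcal{N}$, giving at most $|\mathcal{N}| \cdot H'(p,d) =: H(p,d)$ ellipsoids of volume $v$ forming a quantitative $v$-transversal of $\C$.

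The main obstacle I expect is step (1)/(2): making the ``set of admissible ellipsoid centers'' genuinely convex and of bounded complexity. For a single fixed affine shape, the set of translations $t$ with $t + N \subseteq C$ is exactly $\bigcap_{x \in N}(C - x)$, an intersection of translates of the convex set $C$, hence convex — that part is clean. The subtlety is that we are not allowed to fix a single shape: an intersection $\bigcap_{i}C_i$ may contain a large ellipsoid of a shape that no smaller subfamily's intersection accommodates, so one must either (a) quantize the space of shapes into a bounded net and pay a controlled volume loss (the factor hidden in $c$ in $\qhellyvol$ already has slack for this), then pigeonhole, or (b) work directly with the $\binom{d+2}{2}$-dimensional parameter space of positive-definite quadratic forms and observe that ``$C$ contains the unit ball of the form $Q$'' is a convex condition on $Q$ after a log-type reparametrization — this is essentially the technique behind Proposition~\ref{prp:QFHmany} and the $\frac{d(d+3)}{2}$ appearing there. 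Reconciling the smaller subfamily size $3d+1$ used here with that parameter count, while keeping VC-dimension bounded, is where the argument needs care; everything downstream is the off-the-shelf Alon--Kleitman pipeline.
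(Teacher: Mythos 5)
Your high-level skeleton (transfer the $(p,q)$ hypothesis to a fractional Helly hypothesis by counting, apply Theorem~\ref{thm:QFHfew}, bound a fractional transversal by LP duality, then finish with an epsilon-net argument) is the right one and matches the paper's strategy, but the two steps you yourself flag as delicate are genuine gaps, not just technicalities. First, the reduction to point transversals via a finite net $\mathcal{N}$ of ellipsoid shapes cannot work as stated: the shape space of volume-$v$ ellipsoids (positive definite matrices of fixed determinant) is non-compact, and a convex set, or an intersection of $3d+1$ members, may contain only arbitrarily eccentric ellipsoids of the required volume; hence no finite shape net chosen independently of $\C$ has the property that containment of some volume-$\geq v$ ellipsoid forces containment of a copy of a net shape, and the volume slack in $\qhellyvol$ does not repair this, since the issue is eccentricity, not size. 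Your fallback (b), parametrizing by quadratic forms, is essentially the $\frac{d(d+3)}{2}$-parameter route and leads back to Proposition~\ref{prp:pqlargep}, i.e., it forfeits exactly the point of proving the theorem with $3d+1$. Second, the claim that the resulting system has bounded (dual) VC dimension because its members are convex is false: convex sets in $\Red$ have unbounded primal and dual VC dimension, which is precisely why the Alon--Kleitman argument needs \emph{weak} epsilon-nets rather than VC-based epsilon-nets; so the ``off-the-shelf bounded-VC pipeline'' you invoke in steps (2) and (5) does not exist. In the quantitative setting one needs a weak epsilon-net theorem whose ``net points'' are volume-$v$ ellipsoids, and supplying this is a substantial part of the work, not a citation.

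The paper avoids your reduction entirely: it keeps ellipsoids as the ground set, defines quantitative fractional $v$-transversals and $v$-matchings, reduces to a finite vertex set by taking lowest volume-one ellipsoids of intersections, and bounds the fractional $v$-transversal number (Lemma~\ref{lemma:boundedvtransversal}) by LP duality together with the Alon--Kleitman duplication trick and Theorem~\ref{thm:QFHfew} -- this is also where the ``two hypergraphs'' issue (volume-one ellipsoids in the hypothesis versus volume-$v$ ellipsoids in the conclusion), which your sketch does not address, is handled. It then concludes with a quantitative weak epsilon-net theorem (Theorem~\ref{thm:weakepsilonnet}), built from a quantitative selection lemma (Lemma~\ref{thm:QSelection}) and the quantitative Tverberg theorem (Theorem~\ref{thm:QTverberg}). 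To repair your write-up you would need to replace steps (1), (2), (5) by such a quantitative weak epsilon-net argument (or an actual proof of a reduction to points, which currently does not exist), and to spell out the LP-duality/duplication step at the level of detail of Lemma~\ref{lemma:boundedvtransversal}.
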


The following variant is presented in \cite{SaXuSo}, where $p$ is larger than in Theorem~\ref{thm:pqsmallp}, it is $\frac{d(d+3)}{2}$ and, in return, we obtain a quantitative 1-transversal, that is, there is no loss in the volume of the ellipsoids.

\begin{proposition}[Quantitative \texorpdfstring{$(p,q)$}{(p,q)}-Theorem -- large lower bound on $p$, {\cite[Theorem~5.0.1]{SaXuSo}}]\label{prp:pqlargep}	
	For every dimension $d \geq 1$ and every positive integer $p \geq \frac{d(d+3)}{2}$, there is an integer 
	$H = H(p,d)$ such that the following holds.
	
	Let $\C$ be a finite family of convex sets in $\Red$, each containing an ellipsoid of volume one, and assume that among any $p$ members of $\C$, there exist $\frac{d(d+3)}{2}$ whose intersection contains an ellipsoid of volume one.
	
	Then, $\C$ has quantitative $1$-transversal number at most $H(p,d)$.
\end{proposition}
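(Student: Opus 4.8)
The plan is to imitate the Alon--Kleitman derivation of a $(p,q)$-theorem from a Fractional Helly theorem \cite{alon1992piercing}, with Proposition~\ref{prp:QFHmany} playing the role of the classical Fractional Helly Theorem. Write $q=\tfrac{d(d+3)}{2}$ and $n=|\C|$. First, a standard double counting turns the $(p,q)$-hypothesis into a fractional one: every $p$-element subfamily of $\C$ contains a $q$-element subfamily whose intersection contains a unit-volume ellipsoid, and each $q$-element subfamily lies in exactly $\binom{n-q}{p-q}$ of the $p$-element subfamilies, so at least $\binom{n}{p}\big/\binom{n-q}{p-q}=\binom{n}{q}\big/\binom{p}{q}$ of the $q$-element subfamilies have this property. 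Thus the hypothesis of Proposition~\ref{prp:QFHmany} holds with $\alpha=\binom{p}{q}^{-1}$; set $\beta=\tfrac{2\alpha}{d(d+3)}$ and $t=\lceil 1/\beta\rceil$.

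Next I would bound the \emph{fractional} quantitative $1$-transversal number $\tau^{*}=\tau^{*}(\C,1)$, the value of the linear relaxation of $\tau(\C,1)$: the minimum total mass of a nonnegative measure on the set of unit-volume ellipsoids giving mass at least $1$ to $\{F: F\subseteq C\}$ for each $C\in\C$. I claim $\tau^{*}\le t$. Otherwise LP duality yields weights $w\colon\C\to[0,\infty)$ of total weight $>t$ such that every unit-volume ellipsoid is contained in members of total $w$-weight at most $1$; since each $C\in\C$ itself contains a unit ellipsoid, this forces $w_C\le 1$ for all $C$. Replacing each $C$ by $\lfloor Mw_C\rfloor$ copies for a large integer $M$ yields a family satisfying the hypothesis of Proposition~\ref{prp:QFHmany} (the fractional condition is inherited from the double counting above, up to a negligible correction coming from coinciding copies once $t$ is large, which is the only case needing attention). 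Its conclusion produces a unit-volume ellipsoid contained in members of total $w$-weight at least $\beta$ times the total, hence strictly more than $\beta t\ge 1$ — contradicting the choice of $w$. So $\tau^{*}\le t$.

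Finally, I would round the fractional transversal to an integral one. Normalising an optimal fractional transversal gives a probability measure $\mu$ on the set $X$ of unit-volume ellipsoids in $\Red$ with $\mu\big(R_C\big)\ge 1/t$ for every $C\in\C$, where $R_C:=\{F\in X: F\subseteq C\}$; a set $N\subseteq X$ of bounded size meeting every $R_C$ is precisely a quantitative $1$-transversal of $\C$, so it suffices to find such an $N$. This last step is the main obstacle: the range space $(X,\{R_C:C\in\C\})$ has \emph{unbounded} VC dimension (any finite set of unit ellipsoids whose centres are in convex position is shattered by the $R_C$), so the Haussler--Welzl $\varepsilon$-net theorem does not apply and one must instead produce a \emph{weak} $\varepsilon$-net. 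Here one exploits the convexity carried by the $\tfrac{d(d+3)}{2}$-dimensional space of ellipsoids: the sets $R_C$ are closed under intersection, $R_{C_1}\cap R_{C_2}=R_{C_1\cap C_2}$, and the hull they induce on ellipsoids $F_1,\dots,F_k$ is $R_{\conv{F_1\cup\dots\cup F_k}}$. A bound on the Radon number of this convexity space — equivalently, the quantitative weak $\varepsilon$-net assertion that boundedly many unit ellipsoids suffice to meet every convex body containing $\mu$-measure at least $1/t$ of the unit ellipsoids — then yields, via the abstract machinery of Holmsen \cite{holmsen19} (or the weak-net iteration of \cite{alon1992piercing}), a set $N$ of size $H=H(p,d)$, completing the proof. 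This is the route taken in \cite[Section~5]{SaXuSo}; establishing the quantitative Radon/weak-net bound for unit ellipsoids is where the real work lies, and the statement follows once it is in hand.
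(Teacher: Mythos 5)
Your high-level route coincides with the paper's: use Proposition~\ref{prp:QFHmany} as the fractional Helly input to the Alon--Kleitman scheme, bound the fractional quantitative $1$-transversal number by LP duality, then round with a weak $\varepsilon$-net for unit-volume ellipsoids. The first two steps are essentially sound (the double counting is correct, and the multiset issue you flag is handled cleanly by noting that any $q(p-1)+1$ members of the multiset contain either $q$ copies of one set --- whose intersection is that set, which contains a unit ellipsoid by hypothesis --- or $p$ distinct sets). One point of care you gloss over: your $\tau^{*}$ is an optimum over measures on the infinite set of all unit-volume ellipsoids, so before invoking LP duality you should discretize; the paper does this by taking, for each subfamily of $\C$, the lowest volume-one ellipsoid of its intersection, obtaining a finite hypergraph $(\Elli,\HH)$ to which \cite[Theorem~8]{Alon02} applies directly.

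The genuine gap is the final step. You correctly observe that the range space of unit ellipsoids versus convex bodies has unbounded VC dimension and that a \emph{weak} quantitative $\varepsilon$-net is what is needed, but you do not prove it: you defer it to a Radon-number bound for the ellipsoid convexity space plus ``abstract machinery'', or to \cite{SaXuSo}. That deferred statement is precisely where the content of the proposition lies, and it is exactly Theorem~\ref{thm:weakepsilonnet} of the paper, established before Section~\ref{sec:pqlargep}: the quantitative Tverberg theorem of \cite{SaXuSo} (Theorem~\ref{thm:QTverberg}; its $r=2$ case is the quantitative Radon statement you want) is combined with Proposition~\ref{prp:QFHmany} to prove the quantitative Selection Lemma (Lemma~\ref{thm:QSelection}), and a greedy iteration then yields the weak net. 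So your sketch can be completed along the lines you gesture at, but as written the decisive ingredient is asserted rather than proved; moreover \cite{holmsen19} is the colorful-to-fractional Helly result, not a Radon-number-to-weak-net theorem, so that citation alone would not carry the argument.
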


For results where the number of selected sets is much larger and, in return, one obtains a good approximation of the volume (and not just the volume of the largest ellipsoid contained in a set), see \cite{MessinaSoberon20} and \cite{MR3602856}.

The structure of the paper is the following. In Section~\ref{sec:prelim}, we introduce some of our tools, mostly from \cite{DFN2019}.
We prove Proposition~\ref{prp:QFHmany} in Section~\ref{sec:QFHmany}. Next, as a preparation for the $(p,q)$ results, in Section~\ref{sec:QEpsilonNet}, we prove a quantitative version of the \textbf{Selection Lemma} and the \textbf{Weak Epsilon Net Theorem} (see the statements of these two classical results in that section).
Next, in Section~\ref{sec:pqlargep}, we give a short direct proof of Proposition~\ref{prp:pqlargep}.
The arguments in these sections are modeled on proofs of the corresponding classical (ie., non-quantitative) results: the combinatorial core of those arguments are extracted, and then combined with geometric facts from \cite{DFN2019}, which guarantee uniqueness of certain ellipsoids contained in a convex body (see Lemmas~\ref{lem:lowest} and \ref{lem:dsquare}). 

In contrast, the proofs of Theorems~\ref{thm:QFHfew} and \ref{thm:pqsmallp} require further ideas, that are presented in Sections~\ref{sec:QFHfew} and \ref{sec:pqsmallp}. The difficulty in proving these results, where ``rough approximation'' is required, that is, the number of sets to be selected is $O(d)$ and not $O(d^2)$ (and, in return, there is a substantial loss of volume) is explained at the beginning of Sections~\ref{sec:pqsmallp}.

\section{Preliminaries}\label{sec:prelim}

We will rely on the following quantitative Helly theorem that is phrased in 
terms of the volume of an \emph{ellipsoid} contained in a convex body.

\begin{theorem}[Quantitative Helly Theorem]\label{lem:qhellyell}
Let $C_1,\ldots,C_n$ be convex sets in $\Red$. Assume that the intersection of 
any $2d$ of them contains an ellipsoid of volume at least one. Then 
$\bigcap\limits_{i=1}^n C_i$ contains an ellipsoid of volume at least 
$c^dd^{-3d/2}$ with an absolute constant $c>0$.
\end{theorem}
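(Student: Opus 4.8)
The stated bound is essentially a theorem of Brazitikos \cite{Bra17}, refining Nasz\'odi \cite{MR3439267}, and the plan is to follow that route: combine John's theorem with the Brascamp--Lieb inequality. First I would make the routine reductions. We may take all $C_i$ closed; $K:=\bigcap_{i=1}^n C_i$ has non-empty interior by classical Helly (assuming $n\geq 2d$, else the hypothesis already concerns $\bigcap_i C_i$ and the claim is trivial, since any $d+1$ members have an intersection containing a $2d$-wise intersection, which contains an ellipsoid); and $K$ is bounded, or else it contains arbitrarily large ellipsoids and there is nothing to prove. Let $T$ be the affine map taking the maximal-volume ellipsoid of $K$ to $\Ball$, and set $C_i':=TC_i$, $K':=TK=\bigcap_i C_i'$, so that $\Ball\subseteq K'\subseteq d\,\Ball$ by John's theorem. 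The maximal-volume ellipsoid of $K$ is $T^{-1}\Ball$, of volume $\omegad/|\det T|\geq d^{-d/2}/|\det T|$; hence it suffices to prove $|\det T|\leq (Cd)^d$ for an absolute constant $C$, and then one may take $c=1/C$.

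Next I would extract the contact structure. By John's theorem there are $m\leq\tfrac{d(d+3)}{2}$ contact points $u_1,\dots,u_m\in S^{d-1}\cap\partial K'$ and weights $\lambda_j>0$ with $\sum_j\lambda_j u_j u_j^{\mathsf T}=I_d$ and $\sum_j\lambda_j u_j=0$. For each $j$, since $u_j\in\partial K'=\partial\bigcap_i C_i'$ there is an index $\phi(j)$ with $u_j\in\partial C_{\phi(j)}'$; and because $\Ball\subseteq K'\subseteq C_{\phi(j)}'$ with $u_j\in S^{d-1}$, the supporting hyperplane of $C_{\phi(j)}'$ at $u_j$ is forced to be the one tangent to $\Ball$, so $C_{\phi(j)}'\subseteq H_j:=\{x\st\langle x,u_j\rangle\leq 1\}$.

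The core step is to pass from these (up to $\tfrac{d(d+3)}{2}$ many) halfspaces to just $2d$ of them. I would apply a Batson--Spielman--Srivastava-type spectral sparsification to the John decomposition --- with the barycentre condition folded in by homogenising $u_j$ to $\tfrac1{\sqrt2}(u_j,1)\in S^d$ --- to obtain $J\subseteq\{1,\dots,m\}$ with $|J|\leq 2d$ and weights $\mu_j>0$ such that $\sum_{j\in J}\mu_j u_j=0$ and $c_1 I_d\preceq A:=\sum_{j\in J}\mu_j u_j u_j^{\mathsf T}\preceq C_1 I_d$ with absolute $c_1,C_1>0$. Then the $u_j$ $(j\in J)$ positively span $\Red$, so $P_J:=\bigcap_{j\in J}H_j$ is a bounded polytope; writing $s=\sum_{j\in J}\mu_j=\mathrm{tr}(A)\leq C_1 d$ and using $\mu_j u_j=-\sum_{k\neq j}\mu_k u_k$ together with $\langle x,u_k\rangle\leq 1$, each functional $\langle\,\cdot\,,u_j\rangle$ ranges over an interval of length at most $s/\mu_j$ on $P_J$. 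Feeding the indicators of these intervals into the geometric Brascamp--Lieb inequality, in the version for a general positive-definite $A$ (reduced to the isotropic case by the substitution $x\mapsto A^{-1/2}x$), yields $\vol{P_J}\leq\det(A)^{-1/2}\prod_{j\in J}(s/\mu_j)^{\theta_j}$ with $\theta_j>0$ comparable to $\mu_j$ and $\sum_j\theta_j=d$; the bounds $c_1 I\preceq A\preceq C_1 I$, $|J|\leq 2d$, $s\leq C_1 d$, together with convexity of $t\mapsto t\log(1/t)$, give $\vol{P_J}\leq (Cd)^d$.

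To conclude, enlarge $\{C_{\phi(j)}:j\in J\}$ to a subfamily of exactly $2d$ members; by hypothesis its intersection contains an ellipsoid $F$ of volume one, with $F\subseteq\bigcap_{j\in J}C_{\phi(j)}$, so $TF\subseteq\bigcap_{j\in J}C_{\phi(j)}'\subseteq\bigcap_{j\in J}H_j=P_J$ and therefore $|\det T|=\vol{TF}\leq\vol{P_J}\leq (Cd)^d$, which is exactly what was needed. The genuinely delicate point --- and the one I expect to be the main obstacle --- is the sparsification step: plain Batson--Spielman--Srivastava only produces $O(d)$ directions and, through the homogenisation, only an approximate barycentre, so getting down to exactly $2d$ contact directions while preserving the barycentre exactly and keeping the spectral constants $c_1,C_1$ absolute is where the real work lies, and is precisely the technical heart of the arguments of Nasz\'odi and Brazitikos.
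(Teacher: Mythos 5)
The paper itself gives no proof of Theorem~\ref{lem:qhellyell}: it is imported from the literature, with the remark that the argument of \cite{MR3439267} yields the ellipsoid formulation and that Brazitikos \cite{Bra17} refined the bound to $c^dd^{-3d/2}$. Your sketch is a reconstruction of exactly that cited argument --- John position of $\bigcap_i C_i$, tangent halfspaces at the John contact points, a Batson--Spielman--Srivastava-type selection of at most $2d$ of them, a Brascamp--Lieb estimate $\vol{P_J}\leq (Cd)^d$ for the resulting polytope, and the final accounting $\omega_d(Cd)^{-d}\geq c^dd^{-3d/2}$ --- so it follows the same route as the sources the paper relies on, and the volume bookkeeping is correct.

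Two caveats keep it from being self-contained. First, classical Helly gives only a common \emph{point} of $\bigcap_i C_i$, not nonempty interior; the standard fix is an approximation/limiting argument (e.g.\ replacing each $C_i$ by $C_i+\varepsilon\Ball$) rather than Helly alone. Second, and more substantively, the sparsification lemma as you state it (exactly $2d$ contact directions, the barycentre condition $\sum_{j\in J}\mu_j u_j=0$ preserved \emph{exactly}, absolute spectral constants) is stronger than what the homogenisation-plus-BSS argument delivers: one only gets an approximately zero barycentre, and the cited proofs work with that weaker conclusion, at the price of a slightly more careful width estimate for $P_J$ (the bound $s/\mu_j$ acquires an error term that must be controlled, e.g.\ via a bound on $|x|$ over $P_J$). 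You flag this yourself as the technical heart, and it is precisely what \cite{MR3439267} and \cite{Bra17} carry out; so the proposal is a faithful outline of the intended proof rather than a complete argument, which is consistent with how the paper itself treats this theorem (as a quoted result).
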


Theorem~\ref{lem:qhellyell} was stated in \cite{MR3439267} for volumes of 
intersections and not volumes of ellipsoids with the weaker bound $c^dd^{-2d}$, 
which was an improvement of the volume bound $d^{-2d^2}$ given by B\'ar\'any, 
Katchalski, and Pach \cite{BKP82}. The proof in \cite{MR3439267} clearly yields 
containment of ellipsoids as stated herein, and the argument was 
later refined by Brazitikos \cite{Bra17}, who obtained the bound 
$c^dd^{-3d/2}$ as stated above.

Another one of our key tools is the following observation from \cite{DFN2019}.

\begin{lemma}[{\cite[Lemma~3.2]{DFN2019}}]\label{lem:hellyklee}
Assume that the origin centered Euclidean unit ball, $\Ball$ is the largest 
volume ellipsoid contained in the convex set $C$ in  $\Red$. Let $\Ei$ be 
another ellipsoid in $C$ of volume at least $\delta\vol{\Ball}$ with 
$0<\delta<1$. Then there is a translate of $\frac{\delta}{d^{d-1}} \Ball$ which 
is contained in $\Ei$.
\end{lemma}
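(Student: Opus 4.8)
The plan is to reduce the statement to a lower bound on the smallest semi-axis of $E$. Write the semi-axes of $E$ as $a_1 \ge a_2 \ge \dots \ge a_d > 0$, so that $E$ is a translate of $A(\Ball)$ for a linear map $A$ with these singular values. Since $A(\Ball)$ contains the centered ball $a_d\Ball$, the ellipsoid $E$ contains a translate of $a_d\Ball$, and hence a translate of $\frac{\delta}{d^{d-1}}\Ball$ as soon as $a_d \ge \frac{\delta}{d^{d-1}}$. So it suffices to prove this inequality.

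I would collect two facts. First, from $\vol{E} = \omegad \prod_{i=1}^d a_i$, $\vol{\Ball} = \omegad$, and the hypothesis $\vol{E} \ge \delta \vol{\Ball}$, we get $\prod_{i=1}^d a_i \ge \delta$. Second, since $\Ball$ is the maximal volume ellipsoid contained in $C$, John's theorem gives the inclusion $C \subseteq d\Ball$ (the $d$-fold dilate of $\Ball$ about the origin). Hence $E \subseteq C \subseteq d\Ball$, so $\operatorname{diam}(E) \le 2d$, which forces $a_i \le d$ for every $i$, since the largest semi-axis of an ellipsoid equals half its diameter.

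Combining the two bounds gives
\[
a_d \;=\; \frac{\prod_{i=1}^d a_i}{\prod_{i=1}^{d-1} a_i} \;\ge\; \frac{\delta}{d^{d-1}},
\]
which is what we needed.

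The whole argument is short; its only real content is the John's theorem inclusion $C\subseteq d\Ball$, and the appearance of the factor $d$ there (rather than $\sqrt d$, which would be available if $C$ were centrally symmetric) is exactly what produces the exponent $d-1$ in the conclusion. The only mild point of care is that John's theorem in this form is stated for convex bodies, so one should first observe that we may assume $C$ is bounded: replacing $C$ by its intersection with a sufficiently large ball keeps $\Ball$ maximal inside it and keeps $E\subseteq C$, after which the standard statement applies. I do not expect any serious obstacle beyond this bookkeeping.
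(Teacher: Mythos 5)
Your argument is correct: John's theorem gives $C\subseteq d\,\Ball$ (dilation about the center of the John ellipsoid), so every semi-axis of $\Ei$ is at most $d$, and combined with $\prod_i a_i\geq\delta$ this forces the smallest semi-axis to be at least $\delta/d^{d-1}$, whence $\Ei$ contains a translate of $\frac{\delta}{d^{d-1}}\Ball$; the boundedness caveat is handled adequately (indeed the hypothesis that a largest-volume ellipsoid exists already forces $C$ to be bounded). The paper itself gives no proof — it cites \cite[Lemma~3.2]{DFN2019} — and your argument is exactly the standard one behind that cited lemma, so there is nothing further to compare.
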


We state an immediate corollary of the Fractional Helly Theorem (see 
page~\ref{page:fracHelly}).

\begin{proposition}\label{prop:fhellyklee}	
	For every dimension $d \geq 1$ and every $\alpha \in (0,1)$, the following 
holds.
	
\noindent	Let $\C$ be a finite family of convex sets in $\Red$ and let $L$ be 
a convex set in $\Red$. Assume that among all subfamilies of size $d+1$, there 
are at least $\alpha \binom{|\C|}{d+1}$ for whom the intersection of the $d+1$ 
members contains a translate of $L$.
	
\noindent	Then, there is a subfamily $\C^\prime\subset\C$ of size at least 
$\frac{\alpha}{d+1} |\C|$ such that the intersection of all members of 
$\C^\prime$ contains a translate of $L$.	
\end{proposition}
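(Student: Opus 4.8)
The plan is to reduce the statement to the classical Fractional Helly Theorem of Katchalski and Liu \cite{KL79} by a standard ``erosion'' trick that converts the relation ``the intersection contains a translate of $L$'' into the relation ``the intersection is nonempty''. For each $C \in \C$, set
\[
  C_L := \bigcap_{\ell \in L} (C - \ell) = \{x \in \Red \st x + L \subseteq C\}.
\]
First I would record two elementary facts. Since $C_L$ is an intersection of translates of the convex set $C$, it is itself convex (possibly empty). And for any subfamily $\mathcal{D} \subseteq \C$, the intersection $\bigcap_{C \in \mathcal{D}} C$ contains a translate of $L$ if and only if $\bigcap_{C \in \mathcal{D}} C_L \neq \emptyset$, because a vector $v \in \Red$ satisfies $v + L \subseteq \bigcap_{C \in \mathcal{D}} C$ precisely when $v \in C_L$ for every $C \in \mathcal{D}$.

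Next I would apply this equivalence to the finite family $\C_L := \{C_L \st C \in \C\}$, viewed as an indexed family so that $|\C_L| = |\C|$. By the second fact, a subfamily of $\C$ of size $d+1$ whose members have intersection containing a translate of $L$ corresponds exactly to a subfamily of $\C_L$ of size $d+1$ whose members have nonempty intersection. Hence the hypothesis of the proposition says precisely that at least $\alpha\binom{|\C|}{d+1}$ of the $(d+1)$-subfamilies of $\C_L$ have nonempty intersection, so the Fractional Helly Theorem applies to $\C_L$ with the same $\alpha$. It yields a subfamily of $\C_L$ of size at least $\frac{\alpha}{d+1}|\C|$ with a common point; pulling this back to $\C$ gives a subfamily $\C' \subseteq \C$ of size at least $\frac{\alpha}{d+1}|\C|$ with $\bigcap_{C \in \C'} C_L \neq \emptyset$, which by the equivalence means $\bigcap_{C \in \C'} C$ contains a translate of $L$.

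I do not expect a genuine obstacle here; as the paper says, this is immediate. The only points needing (trivial) care are the convexity of the eroded sets $C_L$, which is why it is convenient to write $C_L = \bigcap_{\ell \in L}(C-\ell)$ rather than as an abstract Minkowski difference, and the fact that some $C_L$ may be empty: such an empty member can only occur in $(d+1)$-subfamilies that are not among the $\alpha\binom{|\C|}{d+1}$ ``good'' ones, and the conclusion of the Fractional Helly Theorem automatically outputs a subfamily all of whose (eroded) members are nonempty, so no separate argument is required.
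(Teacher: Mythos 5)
Your proposal is correct and is essentially identical to the paper's own proof: your eroded set $C_L=\bigcap_{\ell\in L}(C-\ell)$ is exactly the Minkowski difference $C\sim L$ used there, after which both arguments apply the classical Fractional Helly Theorem to the eroded family. You merely spell out the equivalence and the handling of empty eroded sets in more detail.
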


\begin{proof}[Proof of Proposition~\ref{prop:fhellyklee}]
We use the following operation, the \emph{Minkowski difference} of two convex 
sets $A$ and $B$:
\[
A\sim B:=\bigcap_{b\in B} (A-b).
\]
It is easy to see that $A\sim B$ is the set of those vectors $t$ such that 
$B+t\subseteq A$.

Now, replace each convex set $C$ in $\C$ by $C\sim L$, and apply the Fractional 
Helly Theorem (see page~\ref{page:fracHelly}).
\end{proof}

The following definition and two lemmas introduce the unique lowest ellipsoid of volume one contained in a convex body.

\begin{definition}
	For an ellipsoid $\Ei$, we define its \emph{height} as the largest value of the 
	orthogonal projection of $\Ei$ on the last coordinate axis.
\end{definition}

\begin{lemma}[Uniqueness of Lowest Ellipsoid, {\cite[Lemma~2.5]{DFN2019}}]\label{lem:lowest}
	Let $C$ be a convex body, such that it contains an ellipsoid of volume one. 
Then there is a unique ellipsoid of volume one 
	such that every other ellipsoid of volume one in $C$ has larger 
	height. We call this ellipsoid the \emph{lowest ellipsoid} in $C$.
\end{lemma}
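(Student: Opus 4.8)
The plan is to work throughout with the parametrisation $E(c,M):=M\Ball+c$ of ellipsoids by a centre $c\in\Red$ and a symmetric positive definite shape matrix $M$, and to exploit two convexity facts. First, the support function of $E(c,M)$ in a direction $u$ is $\langle c,u\rangle+\|Mu\|$, which is jointly convex in $(c,M)$; hence the condition $E(c,M)\subseteq C$, being the intersection over all $u\in S^{d-1}$ of the convex constraints $\langle c,u\rangle+\|Mu\|\le h_C(u)$, cuts out a convex set in $(c,M)$-space. Second, $\vol{E(c,M)}=\omega_d\det M$, and $\det$ is log-concave on positive definite matrices (Minkowski's determinant inequality). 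The key device is to \emph{relax} the volume-one requirement $\det M=\omega_d^{-1}$, which describes a non-convex surface, to $\det M\ge\omega_d^{-1}$, i.e.\ $\vol{E(c,M)}\ge 1$, which by log-concavity is convex. Then
\[
\mathcal F:=\{(c,M)\st M\succeq 0,\ \det M\ge\omega_d^{-1},\ E(c,M)\subseteq C\}
\]
is a nonempty (by hypothesis) convex set which is also compact ($C$ is bounded, so the centres lie in $C$ and the operator norms of the $M$'s are bounded, while $\det M\ge\omega_d^{-1}$ keeps the $M$'s nondegenerate). The height functional $(c,M)\mapsto c_d+\|Me_d\|$ is continuous and convex, so it attains its minimum $h^*$ on $\mathcal F$, on a nonempty compact convex set $\mathcal F^\ast$ of minimisers.

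First I would show that every $E=E(c,M)\in\mathcal F^\ast$ has volume exactly one. If $\vol E>1$, shrink $E$ homothetically towards its lowest point (the point of $E$ of smallest last coordinate) by the factor $(\vol E)^{-1/d}\in(0,1)$: the result is an ellipsoid of volume one, still inside $C$ (a convex set shrunk towards one of its points stays inside it), of strictly smaller height, contradicting minimality. Consequently $h^*$ is also the minimal height of a volume-one ellipsoid in $C$, and $\mathcal F^\ast$ is exactly the set of such minimisers, so it suffices to prove $|\mathcal F^\ast|=1$.

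Given $(c^0,M^0),(c^1,M^1)\in\mathcal F^\ast$, their midpoint lies in $\mathcal F^\ast$ (a convex combination of minimisers of a convex function over a convex set), hence $\det\bigl(\tfrac{M^0+M^1}{2}\bigr)=\omega_d^{-1}$ by the previous step. But $\det\bigl(\tfrac{M^0+M^1}{2}\bigr)\ge\sqrt{\det M^0\det M^1}=\omega_d^{-1}$, and equality in Minkowski's inequality forces $M^0$ and $M^1$ to be proportional, so equality of determinants gives $M^0=M^1=:M^\flat$. The equality $c^0_d+\|M^\flat e_d\|=c^1_d+\|M^\flat e_d\|$ then yields $c^0_d=c^1_d$, i.e.\ $E^0$ and $E^1$ differ by a translation orthogonal to $e_d$.

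It remains to rule out a genuine horizontal translate, and this is the one step that is not soft convexity. Put $w:=c^1-c^0\ne 0$, so $w\perp e_d$ and $\conv{E^0\cup E^1}=M^\flat\Ball+c^0+[0,w]\subseteq C$. For small $t>0$ the matrix $P:=(M^\flat)^2+t\,ww^{T}$ is positive definite with $\det P>\det(M^\flat)^2$ (matrix determinant lemma), so $P^{1/2}\Ball$ has volume strictly larger than $\vol{E^0}=1$; a one-line support-function estimate, using $\sigma_{\min}(M^\flat)>0$, shows $P^{1/2}\Ball\subseteq M^\flat\Ball+[-\tfrac12 w,\tfrac12 w]$, and since $w\perp e_d$ the body $P^{1/2}\Ball$ has the same extent in the direction $e_d$ as $M^\flat\Ball$. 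Translating it into $\conv{E^0\cup E^1}$ therefore gives an ellipsoid inside $C$ of volume $>1$ and height exactly $h^*$; shrinking it towards its lowest point as before produces a volume-one ellipsoid in $C$ of height below $h^*$, a contradiction. Hence $w=0$, $\mathcal F^\ast$ is a singleton, and its element is the claimed lowest ellipsoid. I expect this last step to be the main obstacle: the height functional is flat along horizontal translations of a fixed-shape ellipsoid, a degeneracy that the convex-analytic argument cannot see, and it has to be broken by the geometric observation that Minkowski-summing an ellipsoid with a segment leaves room for a strictly larger ellipsoid of the same height.
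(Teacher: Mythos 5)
Your proof is correct. Note that the paper itself gives no argument for this lemma: it is quoted verbatim from \cite[Lemma~2.5]{DFN2019}, so there is no in-paper proof to diverge from; your write-up is essentially the standard argument in the spirit of the cited one. The structure is sound: relaxing $\det M=\omega_d^{-1}$ to $\det M\geq\omega_d^{-1}$ makes the feasible set convex, the shrink-towards-the-lowest-point trick shows minimisers have volume exactly one, the equality case of the determinant inequality pins down the shape matrix, and the Minkowski-sum-with-a-segment construction is exactly what is needed to kill the horizontal-translation degeneracy (your containment $P^{1/2}\Ball\subseteq M^\flat\Ball+[-\tfrac12 w,\tfrac12 w]$ indeed follows from comparing support functions; in fact any $t\leq 1/4$ works, with no need for $\sigma_{\min}$). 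Two cosmetic remarks: you invoke log-concavity of $\det$ to get $\det\bigl(\tfrac{M^0+M^1}{2}\bigr)\geq\sqrt{\det M^0\det M^1}$ but then cite the equality case of Minkowski's determinant inequality; either inequality suffices, but you should use the equality condition of the one you actually applied (strict concavity of $\log\det$ gives $M^0=M^1$ directly, while Minkowski's equality case gives proportionality, which together with equal determinants and positive definiteness again gives equality). Also, the statement concerns a convex body, so your compactness argument (centres in $C$, operator norms bounded by the diameter, closedness of the constraints) is legitimate as written.
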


\begin{lemma}[Lowest ellipsoid determined by $O(d^2)$ members of an intersection {\cite[Lemma~3.1]{DFN2019}}]\label{lem:dsquare}
	Let $C_1,\dots, C_{n}$ be a finite family of convex bodies in $\Red$ whose intersection contains an ellipsoid of volume one. Then, there are $d(d+3)/2-1$ indices $i_1,\dots,i_{d(d+3)/2-1}\in[n]$ such that  
	$\bigcap\limits_{i=1}^{n}C_i$, and 
	$\bigcap\limits_{j=1}^{d(d+3)/2-1}C_{i_j}$ have the same unique lowest ellipsoid.
\end{lemma}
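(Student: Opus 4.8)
The plan is to realise the lowest ellipsoid as the solution of a convex optimisation problem and then apply Helly's theorem in the parameter space of ellipsoids. First I would relax the volume constraint: parametrise an ellipsoid of volume at least one in $\Red$ by the pair $(A,c)$, where $A$ is a symmetric positive semidefinite $d\times d$ matrix with $\det A\ge \omegad^{-1}$ (so that $\vol{A\Ball+c}\ge 1$) and $c\in\Red$, viewing $(A,c)$ as a point of $\Re^{M}$ with $M=d(d+3)/2$. Three facts make this linearisation work: the set $\mathcal D$ of admissible pairs is convex, since by Minkowski's determinant inequality $A\mapsto(\det A)^{1/d}$ is concave on the positive semidefinite cone; for a fixed convex body $C$ the containment $A\Ball+c\subseteq C$ is equivalent to $\langle u,c\rangle+\|Au\|\le \sup_{x\in C}\langle u,x\rangle$ for all unit vectors $u$, so the set $\mathcal K_C=\{(A,c):A\Ball+c\subseteq C\}$ is convex; and the height of $A\Ball+c$ equals $c_d+\|Ae_d\|$ (with $c_d$ the last coordinate of $c$), a convex function $g$ of $(A,c)$, hence its sublevel sets are convex. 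Finally, shrinking an ellipsoid homothetically towards its lowest point decreases both its volume and its height, so minimising $g$ over $\mathcal D\cap\mathcal K_C$ attains the same value, and — by the uniqueness in Lemma~\ref{lem:lowest} — at the same point, as minimising height over the volume-exactly-one ellipsoids in $C$, namely the lowest ellipsoid of $C$.

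Now let $\Ei^\star=A^\star\Ball+c^\star$ be the lowest ellipsoid of $C:=\bigcap_{i=1}^{n}C_i$, with height $h^\star=g(A^\star,c^\star)$, and let $m$ be the minimum of the last coordinate over $C$; since $\Ei^\star\subseteq C$ is full-dimensional, $m<h^\star$. Fix $t\in(m,h^\star)$. By the previous paragraph, $g\ge h^\star>t$ on $\mathcal D\cap\bigcap_{i=1}^{n}\mathcal K_{C_i}$, so the $n+2$ convex sets $\{g\le t\}$, $\mathcal D$, $\mathcal K_{C_1},\dots,\mathcal K_{C_n}$ in $\Re^{M}$ have empty intersection, whence by Helly's theorem some $M+1$ of them already have empty intersection. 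Neither $\{g\le t\}$ nor $\mathcal D$ may be dropped from such a subfamily: for any $I\subseteq[n]$ the ellipsoid $\Ei^\star$ lies in $\mathcal D\cap\bigcap_{i\in I}\mathcal K_{C_i}$, while (as $t>m$) a sufficiently small ball around an interior point of $C\subseteq\bigcap_{i\in I}C_i$ whose last coordinate is below $t$ lies in $\{g\le t\}\cap\bigcap_{i\in I}\mathcal K_{C_i}$. Hence the empty subfamily is $\{g\le t\}$, $\mathcal D$, and $\mathcal K_{C_i}$ for $i$ in some index set $I=I(t)$ of size $M-1=d(d+3)/2-1$; that is, no ellipsoid of volume at least one contained in $\bigcap_{i\in I}C_i$ has height $\le t$.

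Since there are only finitely many index sets of size $d(d+3)/2-1$, some fixed one, call it $I$, arises as $I(t_k)$ for a sequence $t_k\uparrow h^\star$; consequently no volume-at-least-one ellipsoid in $\bigcap_{i\in I}C_i$ has height $<h^\star$. As $\Ei^\star\subseteq\bigcap_{i\in I}C_i$ has volume one and height exactly $h^\star$, it is the lowest ellipsoid of $\bigcap_{i\in I}C_i$ by the uniqueness in Lemma~\ref{lem:lowest}. Being, by definition, also the lowest ellipsoid of $\bigcap_{i=1}^{n}C_i$, this shows that the two intersections have the same unique lowest ellipsoid, with $|I|=d(d+3)/2-1$.

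The point I expect to be delicate is the bookkeeping that cuts the count down from the naive $M=d(d+3)/2$ to $M-1=d(d+3)/2-1$: this $-1$ is exactly the codimension of the volume-one ellipsoids inside the space of all positive-volume ellipsoids, and in the Helly step it is recovered by noting that two of the $M+1$ convex sets produced — the volume constraint $\mathcal D$ and the objective sublevel set $\{g\le t\}$ — are forced to appear and carry no information about which $C_i$ are used. Secondary points to check are that relaxing ``volume $=1$'' to ``volume $\ge 1$'' does not move the optimiser (the homothety argument, together with Lemma~\ref{lem:lowest}), and that the limit $t\uparrow h^\star$ legitimately upgrades ``height $\le t$ is impossible'' to ``height $<h^\star$ is impossible''.
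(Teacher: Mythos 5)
Your argument is correct: parametrizing volume-at-least-one ellipsoids as points of $\Re^{d(d+3)/2}$, checking convexity of the volume constraint, of each containment set $\mathcal K_{C_i}$ and of the height sublevel sets, applying Helly in that parameter space for $t<h^\star$ (with the two ``forced'' sets accounting for the $-1$ in the count), and then pigeonholing as $t\uparrow h^\star$ and invoking the uniqueness of the lowest ellipsoid is a complete proof; note that the paper itself does not prove this lemma but quotes it from \cite{DFN2019}, whose proof is of exactly this kind (Helly's theorem in the $d(d+3)/2$-dimensional space of ellipsoids together with Lemma~\ref{lem:lowest}). The only blemishes are inessential: the clause that the relaxed problem attains its minimum ``at the same point'' is neither needed nor fully justified (only equality of the optimal values is used), and the trivial edge case $n<d(d+3)/2-1$ is handled by allowing repeated indices.
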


\section{Proof of Proposition~\ref{prp:QFHmany} -- QFH for large subfamilies}
\label{sec:QFHmany}

	The following argument follows closely the proof of Theorem~8.1.1. in \cite{MR1899299}, the only difference is the use of the unique lowest ellipsoid of a convex body (Lemma~\ref{lem:lowest}) instead of its lexicographic minimum.
	
	Let $\C = \{C_1, \ldots, C_n\}$.
	We call an index set $I \in \mybinom{[n]}{\frac{d(d+3)}{2}}$ \emph{good}, if the 
	corresponding intersection $\cap_{i \in I} C_i$ contains an ellipsoid of volume 
	at least one.
	We say that a $\left(\frac{d(d+3)}{2}-1\right)$-element subset $S \subset I$ of a good index set $I\in 
	\mybinom{[n]}{\frac{d(d+3)}{2}}$ is a \emph{seed} of $I$, if $\cap_{i \in I}C_i$ and $\cap_{i \in S}C_i$ have the same lowest ellipsoid. By Lemma~\ref{lem:dsquare}, all good index sets 
	have a seed.
	
	Since we have $\alpha\mybinom{n}{\frac{d(d+3)}{2}}$ good index sets and only 
	$\mybinom{n}{\frac{d(d+3)}{2}-1}$ possible seeds, there is a $\left(\frac{d(d+3)}{2}-1\right)$-tuple $S\in\mybinom{[n]}{\frac{d(d+3)}{2}-1}$ 
	which is the seed of at least 
	\[
	\frac{\alpha\mybinom{n}{\frac{d(d+3)}{2}}}{\mybinom{n}{\frac{d(d+3)}{2}-1}} = 
	\alpha\frac{n-\frac{d(d+3)}{2} + 1}{\frac{d(d+3)}{2}}
	\] good index sets. Every such good index set has the form $S \cup \{i\}$ for an $i$. So we have $\alpha\frac{n-\frac{d(d+3)}{2} + 1}{d(d+3)/2}$ convex bodies containing the lowest ellipsoid of $\cap_{i \in S}C_i$, plus the $(\frac{d(d+3)}{2} -1)$ bodies from $S$. Hence, the lowest ellipsoid of $\cap_{i \in S}C_i$ is contained in at least \[
	\alpha\frac{n+1-d(d+3)/2}{d(d+3)/2} + \frac{d(d+3)}{2} -1 \geq
	\frac{2\alpha n}{d(d+3)}
	\]
	convex bodies among the $C_i$, completing the proof of Proposition~\ref{prp:QFHmany}.

\section{Proof of Theorem~\ref{thm:QFHfew} -- QFH for small subfamilies}\label{sec:QFHfew}

Let $\C = \{C_1, \ldots, C_n\}$.
 	We call an index set $I \in \binom{[n]}{3d+1}$ \emph{good}, if the 
corresponding intersection $\cap_{i \in I} C_i$ contains an ellipsoid of volume 
at least one.
 	We say that a $2d$-element subset $S \subset I$ of a good index set $I\in 
\binom{[n]}{3d+1}$ is a \emph{seed} of $I$, if the volume of the John ellipsoid 
of $\cap_{i \in S}C_i$ is at most $c^{-d}d^{3d/2}$ times the volume of the John 
ellipsoid of $\cap_{i \in I}C_i$, where $c$ is the absolute constant from 
Theorem~\ref{lem:qhellyell}. By Theorem~\ref{lem:qhellyell}, all good index sets 
have a seed.
 	
 	Since we have $\alpha\binom{n}{3d+1}$ good index sets and only 
$\binom{n}{2d}$ possible seeds, there is a $(2d)$-tuple $S\in\binom{[n]}{2d}$ 
which is the seed of at least \[\frac{\alpha\binom{n}{3d+1}}{\binom{n}{2d}} \geq 
\gamma \binom{n}{d+1}\] good index sets. Here $\gamma$ depends on $\alpha$ and 
$d$, but not on $n$.
 	
Let $I_1, \ldots, I_{\gamma \binom{n}{d+1}}$ 
be good index sets whose seed is $S$. Denote the John ellipsoid 
of the intersection $\cap_{i \in S}C_i$ by $\mathcal{E}$ and the John ellipsoid 
of $\cap_{i \in I_j}C_i$ by $\mathcal{E}_j$. By Lemma~\ref{lem:hellyklee}, for 
every $j$, there is a $v_j \in \Red$ such that $c^dd^{-5d/2 + 1}\mathcal{E} + 
v_j \subseteq \mathcal{E}_j$.

 	Thus, we have shown that at least $\gamma\binom{n}{d+1}$ of the $(d+1)$-wise 
intersections contain a translate of $c^dd^{-5d/2 + 1}\mathcal{E}$. We can apply 
\Href{Proposition}{prop:fhellyklee} with $L=c^dd^{-5d/2 + 1}\mathcal{E}$, which 
implies that there are $\frac{\gamma}{d+1} n$ such $C_i$, that their 
intersection contains a translate of $c^dd^{-5d/2 + 1}\mathcal{E}$. And, since 
$\mathcal{E}$ has volume at least one, this ellipsoid has volume at least 
$\qhellyvol$, completing the proof of \Href{Theorem}{thm:QFHfew}.

\section{Roadmap to \texorpdfstring{$(p,q)$}{(p,q)}: Selection Lemma and Weak Epsilon Net}
\label{sec:QEpsilonNet}

In \cite{Alon02}, partly by extracting the combinatorial arguments presented in earlier works, it is shown in an abstract setting (that is, working with hypergraphs in general, and not specifically with convex sets in $\Red$) that a $(p,q)$-theorem may be obtained from a fractional Helly-type theorem in the following manner. First, combined with a Tverberg-type theorem, a fractional Helly-type theorem yields a selection lemma \cite[Proposition~11]{Alon02}, which in turn yields a weak epsilon-net theorem \cite[Theorem~9]{Alon02} using a greedy algorithm. On the other hand, if a hypergraph satisfies the $(p,q)$ condition, then the hypothesis of the fractional Helly-type theorem holds. It follows that the fractional transversal number of the hypergraph is bounded from above, which, combined with a weak epsilon-net, yields that its transversal number is bounded as well, which is what the $(p,q)$-theorem states. In the rest of the paper, we will mark where we follow this path, and where we do not.

In this section, we state the above listed classical (non-quantitative) results along with their quantitative analogs.

\textbf{Tverberg's Theorem} (see \cite{Tv66}, and for a simpler proof \cite{Tv81}) sates the following. \emph{For every dimension $d\geq1$ and integer $r \geq 1$, if $m \geq (r-1)(d+1)+1$ and $\{x_1,\dots,x_m\}$ is a set of points in $\Red$ of size $m$, then there is a partition $\cup_{i = 1}^r I_i = [m]$ such that $\cap_{i=1}^r \conv{\{x_j\st j \in I_i\}}$ is not empty.
}.

It is shown in \cite[Proposition~10]{Alon02} that in an abstract setting, a fractional Helly-type theorem yields a Tverberg-type theorem. However, the Tverberg number (the lower bound on $m$) obtained there is very large. Luckily, we have a quantitative Tverberg theorem dues to Sarkar, Xue and Sober\'on with a much better Tverberg number.

\begin{theorem}[Quantitative Tverberg Theorem {\cite[Theorem~4.1.2]{SaXuSo}}]\label{thm:QTverberg}
	For every dimension $d\geq1$ and integer $r \geq 1$, if $m \geq (r-1)\left(\frac{d(d+3)}{2}+1\right)+1$ and $\{\Ei_1, \ldots, \Ei_m\}$ is a multiset of ellipsoids of volume one, then there is a partition $\cup_{i = 1}^rI_i = [m]$ such that $\cap_{i=1}^r \conv{\{\Ei_j\st j \in I_i\}}$ contains an ellipsoid of volume one.
\end{theorem}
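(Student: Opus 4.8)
My plan is to reduce the statement to the classical Tverberg theorem by a linear lifting of ellipsoids into the parameter space of all ellipsoids. Every ellipsoid $\Ei$ in $\Red$ can be written uniquely as $\Ei = M\Ball + c$, where $M$ is a symmetric positive definite $d\times d$ matrix encoding the shape and $c\in\Red$ is the center; moreover $\vol{\Ei} = \det(M)\,\omegad$, so $\vol{\Ei} = 1$ is equivalent to $\det M = 1/\omegad$. The pair $(M,c)$ ranges over an open subset of $\mathrm{Sym}(d)\times\Red$, a real vector space of dimension $\binom{d+1}{2} + d = \frac{d(d+3)}{2} =: D$. Writing $\Ei_j = M_j\Ball + c_j$, I would apply Tverberg's theorem (which holds for multisets of points) to the $m$ points $(M_1,c_1),\dots,(M_m,c_m)\in\Re^D$: since $m \geq (r-1)(D+1)+1$, there is a partition $[m] = I_1\cup\dots\cup I_r$ together with a common point $(M,c)\in\bigcap_{i=1}^r\conv{\{(M_j,c_j)\st j\in I_i\}}$. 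For each $i$ fix nonnegative coefficients $(\lambda^{(i)}_j)_{j\in I_i}$ with $\sum_{j\in I_i}\lambda^{(i)}_j = 1$, $M = \sum_{j\in I_i}\lambda^{(i)}_j M_j$ and $c = \sum_{j\in I_i}\lambda^{(i)}_j c_j$. Since the cone of positive definite matrices is convex, $M$ is positive definite, so $\Elli := M\Ball + c$ is a genuine ellipsoid.

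Two things then need to be checked. First, the containment $\Elli\subseteq\conv{\{\Ei_j\st j\in I_i\}}$: the point is to use the \emph{same} unit vector in every part. For $u\in\Ball$ and any $i$ we have $Mu + c = \sum_{j\in I_i}\lambda^{(i)}_j(M_ju + c_j)$, a convex combination of the points $M_ju + c_j$, each lying in $M_j\Ball + c_j = \Ei_j$; hence $Mu+c\in\conv{\{\Ei_j\st j\in I_i\}}$, and therefore $\Elli\subseteq\bigcap_{i=1}^r\conv{\{\Ei_j\st j\in I_i\}}$. Second, the volume: by the Minkowski determinant inequality (equivalently, concavity of $\det^{1/d}$ on the positive semidefinite cone), using the coefficients for $i=1$, $\det(M)^{1/d} = \det\!\big(\sum_{j\in I_1}\lambda^{(1)}_j M_j\big)^{1/d}\ge\sum_{j\in I_1}\lambda^{(1)}_j\det(M_j)^{1/d} = (1/\omegad)^{1/d}$, so $\vol{\Elli} = \det(M)\,\omegad\ge 1$. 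A concentric shrinking of $\Elli$ to volume exactly one is then an ellipsoid of volume one contained in $\bigcap_{i=1}^r\conv{\{\Ei_j\st j\in I_i\}}$, which is what we want.

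I do not expect a serious obstacle; the whole content is in choosing the parametrization $\Ei\mapsto(M,c)$ so that (i) its domain has dimension exactly $D = \frac{d(d+3)}{2}$, which is precisely what makes the Tverberg threshold in the hypothesis the right one, (ii) a convex combination of parameters corresponds, via a common unit vector, to an ellipsoid contained in the \emph{convex hull} of the $\Ei_j$ rather than in their intersection, and (iii) passing to convex combinations of the shape matrices does not decrease volume, which is exactly Minkowski's inequality. The only mild point of care is the first verification, where one must resist mixing the coefficients coming from different parts of the Tverberg partition.
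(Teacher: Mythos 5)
Your proof is correct, but note that the paper itself does not prove this statement: it is quoted verbatim from Sarkar, Xue and Sober\'on \cite{SaXuSo}, whose argument is essentially the one you give --- lift each volume-one ellipsoid to its parameters $(M,c)$ in the $\frac{d(d+3)}{2}$-dimensional space of symmetric matrices plus centers, apply classical Tverberg there, and use concavity of $\det^{1/d}$ (Minkowski's determinant inequality) together with the ``same unit vector $u$'' observation to pull the common parameter point back to an ellipsoid of volume at least one inside each $\conv{\{\Ei_j \st j\in I_i\}}$. All the delicate points (uniqueness of the positive definite representation, not mixing coefficients across parts, and the volume lower bound) are handled correctly, so there is nothing to fix.
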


We will need the following form of the above result.

\begin{cor}[Quantitative Tverberg Theorem with equal parts]\label{thm:QTverbergEqual}
	For every dimension $d \geq 1$, if $a = \left(\frac{d(d+3)}{2}-1\right)\left(\frac{d(d+3)}{2}+1\right)+1 $, $b = \frac{d(d+3)}{2}$ and $\{\Ei_1, \ldots,  \Ei_{ab}\}$ is a multiset of ellipsoids of volume one, then there is a partition $\cup_{i = 1}^bI_i = [ab]$ such that $|I_1| = \ldots = |I_b| = a$ and $\cap_{i=1}^b \conv{\{\Ei_j\st j \in I_i\}}$ contains an ellipsoid of volume one.
\end{cor}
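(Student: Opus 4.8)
The plan is to derive Corollary~\ref{thm:QTverbergEqual} from the Quantitative Tverberg Theorem (Theorem~\ref{thm:QTverberg}) by a padding argument, exactly as one derives the equal-parts version of classical Tverberg from the general one. First I would set $r = b = \frac{d(d+3)}{2}$ in Theorem~\ref{thm:QTverberg}. Then the Tverberg number there is $(r-1)\left(\frac{d(d+3)}{2}+1\right)+1 = \left(\frac{d(d+3)}{2}-1\right)\left(\frac{d(d+3)}{2}+1\right)+1 = a$. So Theorem~\ref{thm:QTverberg} applied to \emph{any} multiset of at least $a$ ellipsoids of volume one yields a partition into $b$ parts whose convex hulls have a common point containing a volume-one ellipsoid. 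The point is that we are given $ab$ ellipsoids, which is far more than $a$, and we must arrange the partition so that all $b$ parts have exactly $a$ elements.

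The key step is the following observation: since $ab \geq a$, Theorem~\ref{thm:QTverberg} gives a partition $\cup_{i=1}^b J_i = [ab]$ (some parts possibly empty, but we may assume after relabeling that $J_1,\dots,J_b$ are the nonempty ones, or simply allow empty parts since the intersection condition only gets easier) with $\cap_{i=1}^b \conv{\{\Ei_j \st j \in J_i\}}$ containing an ellipsoid of volume one. Note $\sum_i |J_i| = ab$. Now I would rebalance: while some part has more than $a$ elements, another part has fewer than $a$ elements (by the pigeonhole/averaging identity $\sum|J_i| = ab$), and moving one index from the oversized part to the undersized part only \emph{shrinks} the convex hull of the oversized part and \emph{enlarges} the convex hull of the undersized part. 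Enlarging a convex hull preserves the property that its intersection with the others contains the volume-one ellipsoid; shrinking the oversized one is harmless because we never use minimality. Wait — shrinking one of the hulls could destroy the intersection property. So instead the correct move is: the volume-one ellipsoid lies in every $\conv{\{\Ei_j \st j \in J_i\}}$; by Carath\'eodory-type considerations each such containment is witnessed by boundedly many of the $\Ei_j$, but more simply, I can just add the leftover indices to parts freely. Concretely: run Theorem~\ref{thm:QTverberg} on the first $a$ ellipsoids only? No — that gives $b$ parts summing to $a$, not $ab$.

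The clean argument: apply Theorem~\ref{thm:QTverberg} to the full multiset $\{\Ei_1,\dots,\Ei_{ab}\}$ with $r=b$, obtaining a partition $\cup_{i=1}^b J_i = [ab]$ with the volume-one ellipsoid $\Ei^*$ lying in $\cap_{i=1}^b \conv{\{\Ei_j \st j\in J_i\}}$. Since $\conv{\{\Ei_j \st j \in J_i\}} \subseteq \conv{\{\Ei_j \st j \in J_i'\}}$ whenever $J_i \subseteq J_i'$, we may freely move indices from any part into any other part as long as no part is emptied, and the containment $\Ei^* \in \conv{\{\Ei_j \st j\in J_i\}}$ for the part that only gains indices is preserved, while for the part that loses indices we must ensure it keeps enough witnesses. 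To sidestep this, I observe that the set of indices actually needed to witness $\Ei^* \in \conv{\{\Ei_j \st j \in J_i\}}$ can be taken of size at most $a$ (indeed at most some bounded number, and certainly $|J_i|$ works, and we can discard down — but discarding shrinks the hull, which is the issue). The honest fix, which is what I expect the authors do: note $\sum_i |J_i| = ab$, so on average each part has size $a$; if part $i$ has size $> a$ and part $k$ has size $< a$, move one index from $J_i$ to $J_k$ — this keeps $\Ei^* \in \conv{J_k}$ trivially, and we need $\Ei^* \in \conv{J_i \setminus \{\text{moved index}\}}$, which holds provided the moved index was not essential. By a pigeonhole on Carath\'eodory witnesses (each hull needs $\le d+1$ of its ellipsoids' "corners", or by just choosing, within the oversized part, an index outside a fixed bounded witness set — possible since $|J_i| > a$ and a witness set has bounded size $\le$ some $f(d) \le a$), such an index exists. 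Iterating drives every part to size exactly $a$. The main obstacle, then, is exactly this bookkeeping: verifying that one can always find a removable (non-witness) index in an oversized part, which requires that the Carath\'eodory-type witness size for "$\Ei^*$ lies in the convex hull of these ellipsoids" is at most $a-1$; this follows because the volume-one ellipsoid is determined, via Lemma~\ref{lem:dsquare}-type reasoning or a direct Carath\'eodory argument in the space of ellipsoids, by at most $\frac{d(d+3)}{2}-1 \le a-1$ of the input ellipsoids, so an oversized part of size $\ge a > \frac{d(d+3)}{2}-1$ always contains a spare index to shed.
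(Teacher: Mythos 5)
Your proposal has a genuine gap at its central step. The rebalancing argument requires that in every oversized part one can delete an index while keeping the \emph{same} volume-one ellipsoid $\Ei^*$ inside the convex hull of the remaining ellipsoids, and you justify this by an alleged bounded ``Carath\'eodory witness'' of size at most $\frac{d(d+3)}{2}-1$, citing Lemma~\ref{lem:dsquare}-type reasoning. But Lemma~\ref{lem:dsquare} concerns the lowest ellipsoid of an \emph{intersection} of convex bodies being determined by few of the intersected bodies; it says nothing about containment of a fixed ellipsoid in the \emph{convex hull of a union} of ellipsoids, so it does not transfer. Likewise, quantitative Carath\'eodory-type statements (e.g.\ in \cite{SaXuSo}) only guarantee that the hull of boundedly many points contains \emph{some} ellipsoid of volume one, not the same $\Ei^*$ --- and preserving the same $\Ei^*$ is essential here, since all $b$ parts must share a common ellipsoid. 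Whether a fixed volume-one ellipsoid contained in $\conv{\bigcup_{j\in J}\Ei_j}$ is always witnessed by at most $a-1$ of the $\Ei_j$ is a nontrivial claim that you neither prove nor correctly reduce to anything in the paper, so the iteration ``always find a removable non-witness index'' is not established.

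Ironically, you brushed against the paper's actual proof and discarded it for a spurious reason. The paper applies Theorem~\ref{thm:QTverberg} with $r=b$ to an \emph{arbitrary $a$-element subset} of $[ab]$ (note that $a=(b-1)\left(\frac{d(d+3)}{2}+1\right)+1$ is exactly the Tverberg threshold for $r=b$), obtaining a partition of those $a$ indices into $b$ parts whose hulls share a volume-one ellipsoid; it then tops up each part to size exactly $a$ using the $ab-a$ unused indices, which is possible since each part has size at most $a$ and the deficits sum to $ab-a$. Adding indices only enlarges the convex hulls, so the common ellipsoid survives and no deletion (hence no witness lemma) is ever needed. Your objection ``that gives $b$ parts summing to $a$, not $ab$'' overlooks precisely this padding step, which you yourself had half-noticed (``I can just add the leftover indices to parts freely'') before abandoning it.
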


\begin{proof}[Proof of Corollary~\ref{thm:QTverbergEqual}]
	Pick any $a$ element subset $I \subset [ab]$ and use Theorem~\ref{thm:QTverberg} with $r = b$. It yields a partition $\cup_{i=1}^bI'_i = I$ such that $\cap_{i=1}^b \conv{\{\Ei_j\st j \in I'_i\}}$ contains an ellipsoid of volume one. Now complete the parts of the obtained partition from the remaining $(ab-a)$ indexes into $a$-element disjoint index sets $I_1 \supset I'_1, I_2 \supset I'_2, \ldots, I_b \supset I'_b$. Since $\conv{\{\Ei_j\st j \in I'_i\}} \subset \conv{\{\Ei_j\st j \in I_i\}}$, the partition $\cup_{i = 1}^bI_i = [ab]$ will have the desired properties.
\end{proof}

The \textbf{Selection Lemma} (the planar version first proved by Boros and F\"uredi \cite{BF84}, the general version due to B\'ar\'any \cite{MR676720}) sates the following. 
\emph{
For every dimension $d \geq 1$, there exists a $\lambda \in (0,1)$ with the following property. If $\{x_1,\dots,x_n\}$ is a multiset of points in $\Red$, then there is a subset $\mathcal{H} \subseteq \binom{[n]}{d+1}$ with $|\mathcal{H}| \geq \lambda \binom{n}{d+1}$ such that $\cap_{I \in \mathcal{H}}\conv{\{x_j\st j \in I\}}$ is not empty.
}.

\begin{lemma}[Quantitative Selection Lemma]\label{thm:QSelection}
	For every dimension $d \geq 1$, there exists an integer $a(d)$ and a real number $\lambda(d) \in (0,1)$ with the following property. If $n \geq a$ and $\{\Ei_1, \ldots, \Ei_n\}$ is a multiset of volume one ellipsoids in $\Red$, then there is a subset $\mathcal{H} \subseteq \binom{[n]}{a}$ with $|\mathcal{H}| \geq \lambda \binom{n}{a}$ such that $\cap_{I \in \mathcal{H}}\conv{\cup\{\Ei_j\st j \in I\}}$ contains an ellipsoid of volume one.
\end{lemma}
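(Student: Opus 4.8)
The plan is to mimic the classical derivation of the Selection Lemma from a Tverberg-type theorem and a fractional Helly-type theorem, but with the quantitative ingredients already assembled in the excerpt. Concretely, I would set $a = a(d) := \left(\frac{d(d+3)}{2}-1\right)\left(\frac{d(d+3)}{2}+1\right)+1$, the value from \Href{Corollary}{thm:QTverbergEqual}, and aim to produce the family $\mathcal{H}$ of $a$-element index sets by a counting (double-counting) argument over $b$-tuples of parts, where $b = \frac{d(d+3)}{2}$.

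First I would partition the index set $[n]$, or rather consider all $b$-fold selections: for every choice of $b$ pairwise disjoint $a$-element subsets $J_1, \ldots, J_b \subseteq [n]$, \Href{Corollary}{thm:QTverbergEqual} applied to the ellipsoids indexed by $J_1 \cup \cdots \cup J_b$ yields, after relabelling, a partition into $a$-element parts whose convex hulls have a common point contained in an ellipsoid of volume one — but I actually want the parts to be exactly the $J_i$'s up to reindexing within the union, so I would instead run the counting the standard way: for each $ab$-element subset $U \subseteq [n]$, Corollary~\ref{thm:QTverbergEqual} gives a partition of $U$ into $b$ equal parts of size $a$ whose hulls share a volume-one ellipsoid; record this as one ``good'' unordered $b$-tuple of $a$-sets. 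The number of $ab$-subsets is $\binom{n}{ab}$, and each fixed unordered $b$-tuple $\{J_1,\ldots,J_b\}$ of disjoint $a$-sets arises from exactly one $U$ (its union), so this produces roughly $\binom{n}{ab}$ good $b$-tuples. On the other hand, the total number of unordered $b$-tuples of disjoint $a$-sets is at most $\frac{1}{b!}\binom{n}{a}^b$. Hence some single $a$-set $J^*$ lies in at least a $\frac{1}{b}$-fraction of the good $b$-tuples normalized appropriately — i.e., $J^*$ is a part in $\Omega\!\left(\binom{n}{a}^{b-1}\right)$ good $b$-tuples. Taking $\mathcal{H}$ to be the collection of $a$-sets that appear as a co-part with $J^*$ in some good $b$-tuple does not immediately work, so instead I would iterate: by pigeonhole there is an $a$-set $J^*$ such that the number of good $b$-tuples containing $J^*$ is at least $\frac{b}{\binom{n}{a}}$ times the total count; deleting the coordinate $J^*$ and repeating the pigeonhole on the remaining $b-1$ slots is the mechanism that yields a large $\mathcal{H}$ all of whose members, together with $J^*$, form good $b$-tuples — and since being a good $b$-tuple means the $b$ hulls meet in a volume-one ellipsoid, a fortiori each member $I \in \mathcal{H}$ has $\conv{\cup\{\Ei_j : j\in I\}} \cap \conv{\cup\{\Ei_j : j \in J^*\}} \supseteq$ a volume-one ellipsoid, and chaining through $J^*$ shows $\bigcap_{I\in\mathcal{H}} \conv{\cup\{\Ei_j : j \in I\}}$ contains one too.

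The cleaner route, which I would actually write, replaces the last chaining step by an application of the \emph{quantitative} fractional Helly phenomenon: after the double counting we have $\Omega\!\left(\binom{n}{a}\right)$ many $a$-sets $I$ such that the $\binom{ab}{a}$-many $a$-subsets of a common $ab$-set are ``Tverberg-good'', and one feeds these into \Href{Theorem}{thm:QFHfew} (QFH in terms of ellipsoids) with the roles of the convex sets played by the hulls $\conv{\cup\{\Ei_j : j \in I\}}$. Here is the precise shape: the multiset of hulls $\{\conv{\cup\{\Ei_j : j \in I\}} : I \in \binom{[n]}{a}\}$ has the property that a positive fraction $\alpha = \alpha(d)$ of its $(3d+1)$-subfamilies have an intersection containing a volume-one ellipsoid — this is exactly what Corollary~\ref{thm:QTverbergEqual} plus the counting provides, since from any $ab$ indices we extract $b \geq 3d+1$ hulls with a common volume-one ellipsoid. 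Then Theorem~\ref{thm:QFHfew} outputs a subfamily $\mathcal{H}$ of size $\beta(d)\binom{|\binom{[n]}{a}|}{1}$-ish whose total intersection contains an ellipsoid of volume $\qhellyvol$. A volume loss appears, so to keep the clean ``volume one'' conclusion one should instead invoke \Href{Proposition}{prp:QFHmany} (QFH for large subfamilies, no volume loss) with parameter $\frac{d(d+3)}{2} = b$; this matches the part-size $b$ coming out of Tverberg exactly, and yields $\mathcal{H}$ with $|\mathcal{H}| \geq \lambda(d)\binom{n}{a}$ and $\bigcap_{I\in\mathcal H}\conv{\cup\{\Ei_j:j\in I\}}$ containing a volume-one ellipsoid.

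The main obstacle, and the step I expect to need the most care, is the bookkeeping of the double-counting: one must verify that every $ab$-element subset $U$ contributes at least one genuinely ``good'' $b$-tuple of $a$-sets (immediate from Corollary~\ref{thm:QTverbergEqual}), that distinct $U$ give distinct $b$-tuples (true since the $b$-tuple's union recovers $U$), and — the subtle point — that when we then restrict to $(3d+1)$- or $b$-subfamilies of the hull-family and apply the quantitative fractional Helly theorem, the fraction $\alpha$ is bounded below by an explicit constant depending only on $d$, uniformly in $n$. This requires estimating $\binom{n}{ab}\big/\binom{n}{b}$ against $\binom{n}{a}^{?}$ and absorbing the $b!$ and the number of $a$-subsets of a $b$-union into a $d$-dependent constant; these are routine binomial manipulations once set up, but they are where an error would most plausibly creep in, and where the value of $\lambda(d)$ (and the fact that $a(d)$ is forced to be the Tverberg-with-equal-parts number) gets pinned down.
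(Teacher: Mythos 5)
Your ``cleaner route'' is essentially the paper's own proof: apply Corollary~\ref{thm:QTverbergEqual} to each $ab$-element subset of $[n]$ to obtain distinct good $b$-tuples of disjoint $a$-sets (so at least $\binom{n}{ab}\geq (ab)^{-ab}\binom{\binom{n}{a}}{b}$ of the $b$-element subfamilies of the hull family are good), and then feed this into Proposition~\ref{prp:QFHmany} with subfamily size $b=\frac{d(d+3)}{2}$, which loses no volume and yields $\lambda(d)$ independent of $n$. Your abandoned first detour (pigeonhole on a common part $J^*$ and ``chaining'') would indeed fail, since the volume-one ellipsoids in the pairwise intersections need not coincide, but as you discard it and commit to the Tverberg-plus-Proposition~\ref{prp:QFHmany} argument, the proposal matches the paper's proof.
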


The proof follows closely the proof of \cite[Proposition~11]{Alon02}.

\begin{proof}[Proof of Lemma~\ref{thm:QSelection}]
	
	Let $a = \left(\frac{d(d+3)}{2}-1\right)\left(\frac{d(d+3)}{2}+1\right)+1 $ and $b = \frac{d(d+3)}{2}$ as in Corollary~\ref{thm:QTverbergEqual}.
	
	Let $\Si = \left\{\conv{\cup\{\Ei_i\st  i \in I\}}\st I \in \binom{[n]}{a}\right\}$. Our plan is to show that a positive fraction of $b$-tuples in $\Si$ has an intersection which contains an ellipsoid of volume one in order to apply Proposition~\ref{prp:QFHmany} to $\Si$. Let
	
	\begin{align*}
		T =
		\bigg\{\{I_1, \ldots, I_b\}\st I_i \in \binom{[n]}{a}, I_i \cap I_j = \emptyset \text{ for } i \neq j \text{ and } \\ \cap_{i = 1}^b\conv{\cup\{\Ei_j\st j \in I_i\}} \text{ contains an ellipsoid of volume } 1\bigg\}.
	\end{align*}
	
	Observe that $|T|$ is at least $\binom{n}{ab}$, since, according to Corollary~\ref{thm:QTverbergEqual}, for each $J \in \binom{[n]}{ab}$ there exists pairwise disjoint $I_1, \ldots, I_b \in \binom{J}{a}$ such that $\cap_{i = 1}^b\conv{\cup\{\Ei_j\st j \in I_i\}}$ contains an ellipsoid of volume one, and so each $J$ contributes a different $b$-tuple in $T$. Hence	
	\[
	|T| \geq
	\binom{n}{ab} \geq
	\left(\frac{n}{ab}\right)^{ab} \geq
	\frac{1}{(ab)^{(ab)}}\mybinom{\binom{n}{a}}{b},
	\]
	which means that we can apply Proposition~\ref{prp:QFHmany} to $\Si$ and conclude that a $\lambda(d)=\beta\left(\frac{1}{(ab)^{(ab)}}, d\right)$ fraction of the members of $\Si$ has an intersection that contains an ellipsoid with volume $1$. This completes the proof of Lemma~\ref{thm:QSelection}.
\end{proof}


The \textbf{Weak Epsilon Net Theorem}, proved by Alon, B\'ar\'any, F\"uredi and Kleitman \cite{ABFK92} sates the following. 
\emph{
	For every dimension $d \geq 1$ there exists a function $f: (0,1] \rightarrow \Re$ with the following property.
	For any $\varepsilon\in(0,1]$, if $\C$ is a finite family of convex bodies in $\Red$, and $w:\Red\rightarrow[0,1]$ is a weight function such that $\sum_{x\in C} w(x)\geq \varepsilon$ for all $C\in\C$, and $\sum_{x\in \Red}w(x)=1$,
	then there is a set $S\subset\Red$ such that each $C \in \C$ contains an element of $S$, and $S$ is of size at most $f(\varepsilon)$.
}


\begin{theorem}[Existence of quantitative weak $\varepsilon$-nets]\label{thm:weakepsilonnet}
	For every dimension $d \geq 1$ there exists a function $f: (0,1] \rightarrow \Re$ with the following property.
	
	For any $\varepsilon\in(0,1]$, if $\C$ is a finite family of convex bodies in $\Red$, and $\Elli$ is a finite family of volume one ellipsoids in $\Red$, and $w:\Elli\rightarrow[0,1]$ is a weight function on this family of ellipsoids such that $\sum_{E\in\Elli, E\subseteq C} w(E)\geq \varepsilon$ for all $c\in\C$, and $\sum_{E\in\Elli} w(E)=1$,
	then there is a family $S$ of ellipsoids of volume one such that each $C \in \C$ contains a member of $S$, and $S$ is of size at most $f(\varepsilon)$.
\end{theorem}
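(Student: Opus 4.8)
The plan is to follow the classical deduction of a weak $\varepsilon$-net from a selection lemma by a greedy algorithm, carried out abstractly by Alon \cite{Alon02}, now feeding in the Quantitative Selection Lemma (Lemma~\ref{thm:QSelection}) in place of the classical one. First I would reduce the weighted statement to a combinatorial one: after a routine rounding of $w$ (replacing $\varepsilon$ by $\varepsilon/2$ and approximating the finitely many values $w(E)$ by rationals with a common denominator $N$), we obtain a multiset $M$ of $N$ ellipsoids, each drawn from $\Elli$, such that every $C \in \C$ contains, with multiplicity, at least $\varepsilon N$ members of $M$. It then suffices to produce a family $S$ of volume one ellipsoids, of size bounded only in terms of $\varepsilon$ and $d$, such that every $C \in \C$ contains a member of $S$.

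The engine is the following compatibility between Lemma~\ref{thm:QSelection} and containment: if $I$ is an $a$-element index set and a convex body $C$ contains $\Ei_j$ for every $j \in I$, then by convexity $C \supseteq \conv{\cup\{\Ei_j \st j \in I\}}$. Hence, applying Lemma~\ref{thm:QSelection} to the current multiset $M' \subseteq M$ produces a volume one ellipsoid $\mathcal{D}$ lying in $\conv{\cup\{\Ei_j \st j \in I\}}$ for at least $\lambda\binom{|M'|}{a}$ of the $a$-subsets $I$ of $M'$; a double counting argument (comparing $\binom{|M'\cap C|}{a}$ with $(1-\lambda)\binom{|M'|}{a}$) then shows that there is a $\gamma = \gamma(\lambda,a)\in(0,1)$ so that every $C \in \C$ containing at least $\gamma\,|M'|$ members of $M'$ actually contains $\mathcal{D}$. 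Thus a single $\mathcal{D}$ pierces all such ``heavy'' bodies at once.

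With this in hand I would run a greedy loop: maintain a set $S$ (initially empty) and a current multiset $M' = M$; in each round apply Lemma~\ref{thm:QSelection} to $M'$, add the resulting $\mathcal{D}$ to $S$, and then perform a cleaning step that shrinks $M'$, restricting it so that the not-yet-pierced bodies become relatively heavier (for instance passing to the members of $M'$ still contained in some not-yet-pierced body, and, when that no longer makes progress, restricting the ground multiset inside a single not-yet-pierced body, where all the convex hulls produced by Lemma~\ref{thm:QSelection} lie inside that body and so the extracted ellipsoid necessarily pierces it). The cleaning must be arranged so that every not-yet-pierced body still contains a positive fraction of the surviving multiset — keeping the Selection Lemma applicable — while the effective threshold is driven down from $\gamma$ towards $\varepsilon$. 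One argues that after a number of rounds bounded only in terms of $\varepsilon$ and $d$ every $C \in \C$ is pierced; that bound is the desired $f(\varepsilon) \geq |S|$.

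The main obstacle is precisely this termination and cleaning bookkeeping. A single application of Lemma~\ref{thm:QSelection} only pierces bodies occupying a $\gamma$-fraction of the surviving multiset, and since $\lambda = \lambda(d)$ is small the threshold $\gamma = (1-\lambda)^{1/a}$ is close to $1$, far above $\varepsilon$; so the cleaning step must steadily concentrate the multiset on the not-yet-pierced bodies so that finitely many rounds — a number depending on $\varepsilon$ and $d$ only, and in particular not on $|\C|$ or $N$ — suffice. Getting this count right while never destroying the hypothesis needed to reapply the Selection Lemma is the heart of the argument, and mirrors the corresponding step in \cite{Alon02}.
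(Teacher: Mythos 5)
There is a genuine gap: the termination bound, which is exactly what makes $|S|\le f(\varepsilon)$ independent of $|\C|$ and of the size of the multiset, is never established, and you acknowledge as much when you write that getting the count right ``is the heart of the argument.'' Your scheme applies Lemma~\ref{thm:QSelection} globally to the surviving multiset $M'$, which only pierces bodies containing a $\gamma$-fraction of $M'$ with $\gamma=(1-\lambda)^{1/a}$ close to $1$, and you then hope that an unspecified ``cleaning'' procedure drives the effective threshold from $\gamma$ down to $\varepsilon$ in a number of rounds depending only on $\varepsilon$ and $d$. No mechanism for this is given, and the fallback you mention (restricting the ground multiset to the ellipsoids inside a single unpierced body) pierces only one body per round, so by itself it yields a bound of order $|\C|$, not $f(\varepsilon)$, unless it is combined with a global counting argument that your proposal does not contain.

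The intended argument (the paper follows the classical greedy proof, as in Theorem~10.4.2 of \cite{MR1899299}, with points replaced by volume-one ellipsoids) avoids the threshold $\gamma$ entirely by localizing the Selection Lemma and counting ``killed'' $a$-tuples globally. After rounding $w$ to a multiset $M$ of $N$ ellipsoids so that every $C\in\C$ contains at least $\varepsilon N$ of them: while some $C\in\C$ is unpierced, apply Lemma~\ref{thm:QSelection} to the ellipsoids of $M$ contained in $C$ (at least $\varepsilon N$ of them), obtaining a volume-one ellipsoid $\mathcal{D}$ lying in $\conv{\cup\{\Ei_j\st j\in I\}}$ for at least $\lambda\binom{\varepsilon N}{a}$ of the $a$-tuples $I$ of ellipsoids inside $C$; each such hull is contained in $C$, so $\mathcal{D}$ pierces $C$; declare these $a$-tuples killed. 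The crucial observation, missing from your write-up, is that no $a$-tuple is ever killed twice: if an $a$-tuple of ellipsoids all contained in $C$ had been killed in an earlier round, the ellipsoid added at that round would lie in its hull and hence in $C$, contradicting that $C$ was still unpierced. Since there are only $\binom{N}{a}$ $a$-tuples in total, the number of rounds is at most $\binom{N}{a}\big/\bigl(\lambda\binom{\varepsilon N}{a}\bigr)$, which is bounded by a function of $\varepsilon$, $\lambda(d)$ and $a(d)$ alone; this is the $f(\varepsilon)$ you need, and it is the step your proposal leaves open.
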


\begin{proof}[Sketch of the proof of Theorem~\ref{thm:weakepsilonnet}]
	The existence of quantitative weak epsilon nets follows from the Quantitative Selection Lemma (Lemma~\ref{thm:QSelection}), using a greedy algorithm. A very similar proof can be found in \cite[Theorem~10.4.2.]{MR1899299}.
\end{proof}

We note that by using Theorem~\ref{thm:QTverberg}, we obtain a smaller $f(\varepsilon)$ than we would if we used \cite[Theorem~9]{Alon02}.

\section{Proof of Proposition~\ref{prp:pqlargep} -- Quantitative \texorpdfstring{$(p,q)$}{(p,q)}-Theorem with a large bound on \texorpdfstring{$p$}{p}}
\label{sec:pqlargep}

Let $\C$ be a finite family of convex bodies in $\Red$ as in Proposition~\ref{prp:pqlargep}. We will represent $\C$ as a finite hypergraph. For each subfamily of $\C$, take the lowest volume one ellipsoid contained in the intersection of that subfamily, if there is such an ellipsoid. This way, we have a finite family of volume one ellipsoids. This family of ellipsoids, denote it by $\Elli$, will be the vertex set of our hypergraph. For each $C\in\C$, consider the subfamily of ellipsoids in $\Ei$ that are contained in $C$. The edges of the hypergraph will be subfamilies of ellipsoids obtained this way. We denote this hypergraph by $(\Elli,\HH)$.

By \cite[Theorem~8]{Alon02} and Proposition~\ref{prp:QFHmany}, the fractional transversal number (see definition therein, or in \cite[Section~10]{MR1899299}) of $(\Elli,\HH)$ is bounded from above by some $T>0$ that depends on $d$ only.

Our Quantitative Weak $\varepsilon$-Net Theorem, Theorem~\ref{thm:weakepsilonnet}, applied with $\varepsilon=1/T$ now completes the proof of Proposition~\ref{prp:pqlargep}.

In summary, the proof of Proposition~\ref{prp:pqlargep} mainly follows the classical line of reasoning that yields the $(p,q)$-Theorem. As we will see in the next section, to obtain Theorem~\ref{thm:QFHfew}, some other ideas are required.

\section{Proof of Theorem~\ref{thm:pqsmallp} -- Quantitative \texorpdfstring{$(p,q)$}{(p,q)}-Theorem with a small bound on \texorpdfstring{$p$}{p}}\label{sec:pqsmallp}

First, we discuss why the same argument as in the previous section cannot be repeated. The main idea in Section~\ref{sec:pqlargep} was that we considered a hypergraph representing our convex sets, and studied properties of this hypergraph. In the setting of Theorem~\ref{thm:pqsmallp}, however, there are \emph{two} hypergraphs: we obtain one if we consider ellipsoids of volume one in our convex sets, and we obtain another, if ellipsoids of volume $v$ are considered (with $v=c^{d^2}d^{-5d^2/2+d}$). Thus, some additional care is required.

\begin{definition}[Quantitative fractional $v$-transversal number]
	For a subset $S \subseteq \Red$, let $E_v(S)$ be the set of volume $v$ ellipsoids contained in $S$.
	Let $\C$ be a family of subsets of $\Red$ and let $\varphi: E_v(\Red) \rightarrow [0,1]$ be a function that attains only finitely many nonzero values. We say that $\varphi$ is a \emph{quantitative fractional $v$-transversal} for $\C$, if $\sum_{\Ei \in E_v(C)} \varphi(\Ei) \geq 1$ for all $C \in \C$.
	The \emph{quantitative fractional transversal number} of $\C$ is the infimum of $\sum_{\Ei \in E_v(\Red)}\varphi(x)$ over all quantitative fractional transversals $\varphi$ of $\C$.
\end{definition}

\begin{lemma}[Boundedness of quantitative fractional $v$-transversal number]\label{lemma:boundedvtransversal}
	For every $d$ and $p \geq 3d+1$ there exists a $h = h(p,d)>0$ with the following property.
	
	Let $\C$ be a finite family of convex sets in $\Red$, each containing an ellipsoid of volume one, and assume that among any $p$ members of $\C$, there exist $3d+1$ whose intersection contains an ellipsoid of volume one.
	
	Then $\C$ has quantitative fractional $v$-transversal number less than $h$ with $v=c^{d^2}d^{-5d^2/2+d}$, where $c$ is the universal constant from Theorem~\ref{lem:qhellyell}.
\end{lemma}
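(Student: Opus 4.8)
The plan is to follow the standard LP-duality argument that bounds a fractional transversal number by a fractional matching number, but carried out carefully in the quantitative setting where, as noted at the start of this section, two scales of ellipsoids (volume $1$ and volume $v$) are in play. First I would set up the linear program: the quantitative fractional $v$-transversal number of $\C$ equals, by LP duality, the supremum over ``fractional matchings'', i.e. weight functions $w$ on $\C$ with $0 \le w \le 1$ such that for every volume-$v$ ellipsoid $\Ei$ the total weight $\sum_{C\in\C,\ \Ei\subseteq C} w(C)$ is at most $1$ (equivalently: $\sum_{C} w(C)$ subject to the constraint that no volume-$v$ ellipsoid is ``overloaded''). Here one must be slightly careful to argue that the LP has finite size / attains its value: since $\C$ is finite there are only finitely many combinatorially distinct intersection patterns, and for each pattern the lowest ellipsoid of a given volume is unique (Lemmas~\ref{lem:lowest} and \ref{lem:dsquare}), so the relevant constraint set can be taken finite; then strong LP duality applies.

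Next I would show that the fractional matching number is bounded by a constant $h=h(p,d)$. Suppose for contradiction that some fractional matching $w$ has $\sum_{C\in\C} w(C)$ very large. Rescale $w$ to a probability distribution on $\C$ (equivalently on a suitable multiset of volume-$1$ ellipsoids, one per $C$, chosen as the lowest volume-$1$ ellipsoid in $C$). The $(p,q)$-type hypothesis — among any $p$ members of $\C$ there are $3d+1$ whose intersection contains a volume-$1$ ellipsoid — says, via a standard double-counting / averaging argument over random $p$-subsets sampled according to $w$, that a positive fraction (depending only on $p$ and $d$) of the $(3d+1)$-subsets, weighted by $w$, have an intersection containing a volume-$1$ ellipsoid. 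This puts us exactly in the hypothesis of the Quantitative Fractional Helly Theorem (Theorem~\ref{thm:QFHfew}), in its weighted form: I would either invoke a weighted version of Theorem~\ref{thm:QFHfew} or, more cleanly, discretize $w$ to rational weights and pass to a multiset of copies of the $C$'s so that the unweighted statement applies. The conclusion is a subfamily carrying a $\beta$-fraction of the total $w$-mass whose common intersection contains an ellipsoid of volume $\qhellyvol = v$.

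Finally I would derive the contradiction: that subfamily has $w$-mass $\ge \beta \sum_C w(C)$, and the single volume-$v$ ellipsoid $\Ei$ contained in the intersection of all its members is contained in each of those members, so $\sum_{C:\,\Ei\subseteq C} w(C) \ge \beta \sum_C w(C)$. But the matching constraint forces this to be $\le 1$, hence $\sum_C w(C) \le 1/\beta$. Thus $h = 1/\beta = 1/\beta\!\left(\alpha(p,d),\, d\right)$ works, where $\alpha(p,d)$ is the fraction of good $(3d+1)$-tuples guaranteed by the $(p,q)$ condition (one can take $\alpha(p,d) = \binom{p}{3d+1}^{-1}$ or similar from the averaging step). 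The main obstacle I expect is purely technical rather than conceptual: making the averaging/double-counting step that converts the ``$(p,q)$ on $p$-subsets'' hypothesis into a ``positive density of good $(3d+1)$-subsets'' conclusion work cleanly with a weight function (rather than for a plain finite family), and correctly threading the weighted version of Theorem~\ref{thm:QFHfew} through — either by proving the weighted QFH directly (the proof in Section~\ref{sec:QFHfew} goes through essentially verbatim with weights, since it is a seed-counting argument) or by a careful rational-approximation-and-blow-up argument. Keeping the two ellipsoid scales straight (volume $1$ entering the hypotheses, volume $v=\qhellyvol$ coming out) is the other place where care is needed, but Theorem~\ref{thm:QFHfew} is precisely designed to bridge them.
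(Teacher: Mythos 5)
Your plan is correct and follows essentially the same route as the paper: LP duality (made finite by passing to lowest ellipsoids), rational discretization of an optimal fractional matching and blow-up to a multiset $\tilde{\C}$ so that the unweighted Theorem~\ref{thm:QFHfew} applies, and then the matching constraint against the single volume-$v$ ellipsoid contained in a $\beta$-fraction of the (multi)family, giving $h=1/\beta$. The one technical point you flag --- transferring the $(p,3d+1)$ condition to the weighted/multiset setting --- is exactly where the paper works: it replaces $p$ by $\tilde{p}=3d(p-1)+1$ and uses the hypothesis that each $C\in\C$ contains a volume-one ellipsoid, so that a $\tilde{p}$-tuple with $3d+1$ repeated copies of one set is also a good tuple.
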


\begin{proof}
	We are going to use linear programming duality as in \cite{alon1992piercing}, but first we need the definition of quantitative fractional $v$-matching numbers.

	\begin{definition}
		Let $\C$ be a finite family of convex sets in $\Red$ and let $m: \C \rightarrow [0,1]$ a function. We say that $m$ is a \emph{quantitative fractional $v$-matching} for $\C$, if for every ellipsoid $\Ei \subset \Red$ with volume $v$, the sum $\sum_{\Ei \subset C \in \C}m(C)$ is at most $1$. The \emph{quantitative fractional $v$-matching number} of $\C$ is the supremum of $\sum_{C \in \C}m(C)$ over all quantitative fractional $v$-matchings of $\C$.
	\end{definition}
	
    Let $v = \qhellyvol$.
	Note that if we consider each member $C$ of $\C$ as the collection of all volume $v$ ellipsoids that it contains, then we obtain a hypergraph with finitely many edges, but an infinite vertex set. However, we can replace this infinite vertex set by a finite one, just like at the beginning of Section~\ref{sec:pqlargep}, and obtain another hypergraph with the same (fractional) transversal and matching numbers as those of the original hypergraph. Now, we may apply the duality of linear programming to see that the quantitative fractional $v$-matching number and the quantitative fractional $v$-transversal number are equal for any family of convex sets $\C$. We denote it by $\nu_v^*(C)$. 
	
	We know also, that there is an optimal quantitative fractional $v$-matching taking only rational values. Let $m$ be such a quantitative fractional $v$-matching and suppose, that $m(C) = \frac{\tilde{m}(C)}{D}$ for every $C$, where $\tilde{m}(C)$ and $D$ are integers.
	
	Let $\tilde{\C} = \{C_1, \ldots, C_N\}$ be the multiset that contains $\tilde{m}(C)$ copies of each $C \in \C$. Taking some sets with multiplicities does not change the quantitative fractional matching number, so $\nu_v^*(\tilde{\C}) = \nu_v^*(\C)$.  We know that among any $p$ members of $\C$, there are $3d+1$ whose intersection contains an ellipsoid with volume $1$. So among any $\tilde{p} = 3d(p-1) + 1$ members of $\tilde{\C}$ there are $3d+1$ whose intersection contains an ellipsoid with volume $1$, because the $\tilde{p}$ element multiset from $\tilde{\C}$ either contains $3d+1$ copies fo the same set, or $p$ different sets from $\C$.
	
	For every $I \in \binom{[N]}{\tilde{p}}$ there is a subset $J \subset I$ with $|J| = 3d+1$ and $\cap_{j \in J} C_j$ containing an ellipsoid of volume one. So there are at least
	\[
	\frac{\binom{N}{\tilde{p}}}{\binom{N-3d+1}{\tilde{p}-3d+1}} \geq
	\alpha \binom{N}{3d+1}
	\]
	$3d+1$-tuples from $\C$ whose intersection contains an ellipsoid of volume one. We can apply Theorem~\ref{thm:QFHfew} and conclude that there are $\beta N$ sets from $\tilde{\C}$ whose intersection contains an ellipsoid with volume $v$. On the other hand, no ellipsoid of volume $v$ can be in more than $\frac{N}{\nu_1^*(\C)}$ of the sets from $\tilde{\C}$, hence $\nu_v^*(\C) \leq \frac{1}{\beta}$, completing the proof of Lemma~\ref{lemma:boundedvtransversal}.
\end{proof}

Now we are ready to prove Theorem~\ref{thm:pqsmallp}.

\begin{proof}[Proof of Theorem~\ref{thm:pqsmallp}.]
	If among any $p$ members of $\C$ there are $3d+1$ whose intersecton contains an ellipsoid of volume $1$, then from Lemma~\ref{lemma:boundedvtransversal} it follows, that $\C$ has quantitative fractional $v$-transversal number at most $h(p,d)$ with $v = \qhellyvol$. Let $\varphi$ be a quantitative fractional $v$-transversal of size $h(p,d)$, and for every ellipsoid $E$ with volume $v$ let $w(E) = \frac{\varphi(E)}{h(p,d)}$. We can use Theorem~\ref{thm:weakepsilonnet} with $w$ and $\C$ and conclude that there is a $\frac{1}{h}$-net $S$ with volume $v$ ellipsoids for $\C$ of size at most $f\left(\frac{1}{h}\right)$. Since for every $C \in \C$ the inequality $\sum_{E \subset C}w(E) \geq \frac{1}{h}$ holds, every $C \in \C$ contains at least one ellipsoid from $S$ and Theorem~\ref{thm:pqsmallp} follows with $H(p,d) = |S| = f\left(\frac{1}{h(p,d)}\right)$.
\end{proof}

\section{Concluding remarks}

The following questions are left open. First, can the Helly number $3d+1$ be replaced by $2d$ in Theorems~\ref{thm:QFHvolFew}, \ref{thm:QFHfew}, \ref{thm:pqsmallp}? It would be interesting to see such result, even if the volume bound on the ellipsoid is worse than the $d^{-cd^2}$, which we obtained.

Second, can the volume bound in the above mentioned theorems be replaced by $d^{-cd}$?

Finally, as mentioned in Section~\ref{sec:QFHmany}, according to \cite{holmsen19} by Holmsen, a colorful Helly type theorem yields a fractional Helly type theorem in the abstract setting, when one hypergraph is considered (see Section~\ref{sec:pqsmallp} for the explanation of one hypergraph vs. two). Can one give a quantitative analogue of this result? More precisely, if we make some assumptions on a pair of hypergraphs, then does a colorful Helly type theorem that holds for the pair of hypergraphs imply the corresponding fractional result for the same pair? That would mean that Theorem~\ref{thm:QFHvolFew} follows from \cite{DFN2019} by an abstract argument.

\invisiblesection{Acknowledgement}
\subsection*{Acknowledgement}
We thank Pablo Sober\'on for our discussions.
Both authors were supported by the grant EFOP-3.6.3-VEKOP-16-2017-00002.
MN was supported also by the J\'anos Bolyai Scholarship of the Hungarian Academy of Sciences as well as the National Research, Development and Innovation Fund (NRDI) grant K119670, as well as the \'UNKP-20-5 New National Excellence Program of the Ministry for Innovation and
Technology from the source of the NRDI.

\bibliographystyle{alpha}
\bibliography{biblio}

\end{document}